\documentclass[11pt]{article}
\usepackage{amsmath,amsthm}
\usepackage{amssymb,mathrsfs}
\usepackage{bm,mathtools}
\usepackage{extarrows}
\usepackage{tikz}
\usetikzlibrary{matrix,calc}
\tikzset{
  chart/.style={
  rectangle,
  rounded corners,
  draw=black, semithick,
  text width=6.2em,
  text centered}
}

\usepackage{xcolor}

\usepackage{geometry}
\usepackage[colorlinks=true,
linkcolor=blue,citecolor=blue,
urlcolor=blue,pagebackref]{hyperref}

\makeatletter
\def\@seccntDot{.}
\def\@seccntformat#1{\csname the#1\endcsname\@seccntDot\hskip 0.5em}
\renewcommand\section{\@startsection{section}{1}{\z@}%
{18\p@ \@plus 6\p@ \@minus 3\p@}%
{9\p@ \@plus 6\p@ \@minus 3\p@}%
{\large\bfseries\boldmath}}
\renewcommand\subsection{\@startsection{subsection}{2}{\z@}%
{15\p@ \@plus 6\p@ \@minus 3\p@}%
{6\p@ \@plus 6\p@ \@minus 3\p@}%
{\itshape}}
\renewcommand\subsubsection{\@startsection{subsubsection}{3}{\z@}%
{12\p@ \@plus 6\p@ \@minus 3\p@}%
{\p@}%
{}}
\makeatother

\usepackage{microtype}

\theoremstyle{plain}
\newtheorem{theorem}{Theorem}[section]
\newtheorem{lemma}{Lemma}[section]
\newtheorem{corollary}{Corollary}[section]

\newtheorem{problem}{Problem}[section]
\newtheorem{conjecture}{Conjecture}[section]

\theoremstyle{definition}
\newtheorem{definition}{Definition}[section]

\newtheorem{claim}{Claim}[section]

\numberwithin{equation}{section}
\allowdisplaybreaks
\DeclareMathOperator{\ex}{ex}

\DeclareMathOperator{\EX}{EX}

\title{On spectral Tur\'an theorems: confirming a conjecture of Guiduli and two problems of Nikiforov}

\author{Lele Liu\footnote{School of Mathematical Sciences, Anhui University, Hefei 230601,
P.R. China. E-mail: \texttt{liu@ahu.edu.cn} (L. Liu). Supported by the National
Nature Science Foundation of China (No. 12471320), and Anhui Provincial Natural Science Foundation for Excellent Young Scholars (No. 2408085Y003).}
~~ and ~~
Bo Ning\footnote{College of Cryptology and Cyber Science \& College of Computer Science, Nankai University, Tianjin 300350, P.R. China.
E-mail: \texttt{bo.ning@nankai.edu.cn} (B. Ning). Partially supported by the National Nature Science
Foundation of China (No. 12371350) and Fundamental Research Funds for the Central Universities, Nankai University (No. 63243151).}}

\date{}

\begin{document}
\maketitle

\begin{abstract}
Let $G$ be an $n$-vertex graph, and let $\lambda(G)$ and $\lambda_n(G)$ denote the largest and smallest eigenvalues of its adjacency matrix.  Write $e(G)$ for the number of edges of $G$, $d(G)=2e(G)/n$ for its average degree, and $T_r(n)$ for the $r$-partite Tur\'an graph on $n$ vertices.

We prove four sharp results in spectral Tur\'an theory. First, we confirm Guiduli's spectral dense-neighborhood conjecture (1996) in a stronger form: if $\lambda(G)\ge \lambda(T_r(n))$, then either $G\cong T_r(n)$, or there exists a vertex $v$ such that
$\lambda(G[N(v)]) > \lambda(T_{r-1}(d(v)))$.
Moreover, when $\lambda(G)>\lambda(T_r(n))$, every vertex attaining the maximum entry in any nonnegative Perron eigenvector of $G$ has this property. Second, we answer a problem of Nikiforov (2009) by showing that the exact Tur\'an edge threshold is detected by the exact spectral threshold: for every $r\ge 2$ and every $n$,
$\lambda(G)<\lambda(T_r(n))$, implying $e(G)<e(T_r(n)).$
Our proof also determines the equality cases.  Third, we answer another question of Nikiforov (2009) by showing that his least-eigenvalue clique bound
\[
\omega(G)\ge 1+\frac{2e(G)}{(n-d(G))(d(G)-\lambda_n(G))}
\]
does imply the concise form of Tur\'an's theorem. Finally, we discuss an open problem proposed by Ai et al. (2026) in \cite{ALNS26+}.

\par\vspace{2mm}
\noindent{\bfseries Keywords:} Spectral Tur\'an problem; spectral radius; least eigenvalue; Tur\'an graph
\end{abstract}


\section{Introduction}
\label{sec:1}

Extremal graph theory studies the maximum or minimum possible values of a graph parameter under prescribed structural restrictions.  Its classical starting point is Mantel's theorem for triangle-free graphs \cite{M07} and, more generally, Tur\'an's theorem \cite{T41}: if an $n$-vertex graph contains no copy of $K_{r+1}$, then it has at most $e(T_r(n))$ edges, where $T_r(n)$ is the complete $r$-partite graph whose parts are as equal as possible.  The Erd\H{o}s--Stone--Simonovits theorem \cite{ES46,Si68} shows that, asymptotically, the chromatic number of the forbidden graph governs the extremal number.  These theorems form one of the central lines of modern extremal graph theory.

Spectral extremal graph theory replaces the edge number by eigenvalues of matrices associated with the graph.  In this paper all eigenvalues are adjacency eigenvalues unless otherwise specified.
Given a graph $G$ on $n$ vertices, the \emph{adjacency matrix} $A(G)$ of $G$ is the $n$-by-$n$
matrix whose $(i,j)$-entry is equal to $1$ if the vertices $i$ and $j$ are adjacent and $0$ otherwise. Let $\lambda(G)$ be the spectral radius of $A(G)$. The Rayleigh quotient gives $\lambda(G)\geq 2e(G)/|V(G)|$; hence a sharp spectral upper bound usually implies the corresponding edge-extremal result. The converse direction is subtler: as emphasized in Nikiforov's survey \cite{N11}, spectral analogues often strengthen the classical theorem and sometimes have no purely edge-theoretic counterpart.  The Perron--Frobenius eigenvector is a central tool in this subject; for instance, Tait and Tobin \cite{TT17} used the leading eigenvector of a putative extremal graph to extract the structure of the extremal example.

The spectral Tur\'an theorem began with Nosal's theorem \cite{N70}, which states that if $G$ is triangle-free with $m$ edges, then $\lambda(G)\leq \sqrt m$, with equality for complete bipartite graphs. In particular, every triangle-free graph of order $n$ satisfies $\lambda(G)\leq \sqrt{\lfloor n^2/4\rfloor}$, with equality for $T_2(n)$. Wilf \cite{W86} proved the general clique-number bound $\lambda(G)\le (1-1/r)n$ for $K_{r+1}$-free graphs. The exact adjacency-spectral analog of Tur\'an's theorem was proved by Guiduli \cite{G96} and independently by Nikiforov \cite{N07}.

\begin{theorem}[Spectral Tur\'{a}n theorem \cite{G96,N07}]\label{Thm:spectral-turan}
Let $r\ge 2$, and let $G$ be an $n$-vertex graph containing no copy of $K_{r+1}$.  Then
$\lambda(G)\le \lambda(T_r(n)),$
and equality holds if and only if $G=T_r(n)$.
\end{theorem}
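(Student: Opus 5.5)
We prove the spectral Tur\'an theorem with Zykov symmetrization driven by the Perron eigenvector, followed by a direct comparison among complete $r$-partite graphs. Let $G$ be $K_{r+1}$-free on $n$ vertices. Among all such graphs attaining the maximum spectral radius, we may take $G$ to be one with $\lambda(G)$ maximal. Since adding edges inside a component or joining components can only increase $\lambda$ (by Perron--Frobenius monotonicity), we may assume $G$ is connected once we restrict to extremal graphs. Let $\mathbf{x}>0$ be the unit Perron eigenvector of $A(G)$.

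\emph{Step 1 (symmetrization).} Let $u,v$ be nonadjacent vertices. Form $G_{u\to v}$ by deleting all edges at $u$ and joining $u$ to every vertex of $N(v)$. This graph is still $K_{r+1}$-free, because $u$ is a nonadjacent twin of $v$, so any clique using $u$ gives a clique of the same size using $v$ instead.

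Consider the test vector $\mathbf{y}$ obtained from $\mathbf{x}$ by setting $y_u=x_v$. A Rayleigh quotient comparison, with the standard normalization bookkeeping, shows that if $x_v\ge x_u$ then
\[
\lambda(G_{u\to v})\ \ge\ \lambda(G).
\]
Equality forces $\mathbf{y}$, suitably rescaled, to be an eigenvector.

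Applying this to an extremal $G$, we aim to show that nonadjacency is an equivalence relation. Suppose it fails. Then there exist $u,v,w$ with $uv,vw\notin E(G)$ but $uw\in E(G)$. Following Zykov's argument, we replace both $u$ and $w$ by copies of $v$ when $x_v\ge \max(x_u,x_w)$, and otherwise replace $v$ by a copy of the larger of $u,w$. In either case the quadratic form $\mathbf{x}^{T}A\mathbf{x}$ strictly increases: the edge $uw$ contributed $2x_ux_w$ and is destroyed, but the gain from the copies compensates it, by the usual two-case Zykov computation carried out with $x$-weights in place of degrees.

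This step is the main obstacle. The edge $uw$ must be accounted for correctly, and strictness must be extracted so that the extremal graph is forced to be complete multipartite rather than merely weakly so. I would handle this with the identity
\[
\mathbf{x}^{T}A\mathbf{x}=2\sum_{ij\in E}x_ix_j,
\]
together with the eigen-equations $\lambda x_u=\sum_{j\sim u}x_j$.

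Once nonadjacency is an equivalence relation, $G$ is complete multipartite. Being $K_{r+1}$-free, it has at most $r$ parts, and we may add parts (by splitting a part) up to exactly $r$ without decreasing $\lambda$.

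\emph{Step 2 (balancing).} It remains to show that among complete $r$-partite graphs $K_{n_1,\dots,n_r}$, the spectral radius is uniquely maximized when the part sizes differ by at most one. The spectral radius $\lambda$ is the largest root of the secular equation
\[
\sum_{i=1}^{r}\frac{n_i}{\lambda+n_i}=1,
\]
which follows by taking the eigenvector constant on each part. Suppose $n_1\ge n_2+2$, and move one vertex from part $1$ to part $2$. The function $t\mapsto t/(\lambda+t)$ is strictly concave, so for fixed $\lambda$ the left-hand side strictly increases under this move. Since the left-hand side is decreasing in $\lambda$, the root strictly increases. Hence $T_r(n)$ is the unique maximizer, which gives both the bound and the equality case.

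Uniqueness then follows by combining the strict inequalities in Steps 1 and 2. Any $K_{r+1}$-free graph with $\lambda(G)=\lambda(T_r(n))$ must be extremal, hence complete $r$-partite, hence balanced, and therefore equal to $T_r(n)$.
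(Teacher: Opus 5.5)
Your proof is correct, and it takes a genuinely different route from the paper. The paper does not reprove Theorem~\ref{Thm:spectral-turan}. It cites Guiduli and Nikiforov and only remarks that Theorem~\ref{Thm:main1} recovers it by induction on $r$: neighborhoods of a $K_{r+1}$-free graph are $K_r$-free, so no vertex can satisfy $\lambda(G[N(v)])>\lambda(T_{r-1}(d(v)))$.

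That recovery reduces to complete multipartite graphs by a single local symmetrization at a maximum-entry vertex, $\lambda(G)\le\lambda(\overline{K}_s\vee G[N(v)])$. It then needs the coronal machinery behind Theorem~\ref{Thm:join-preserve-spectral-inequality} and the edge results of Section~\ref{Sec:2}. It also still uses your Step~2: \eqref{eq:inequality-chain} and \eqref{eq:unique-spectral-turan} rely on $T_r(n)$ being the unique spectral maximizer among complete $r$-partite graphs.

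Your global Zykov symmetrization plus the secular-equation balancing is self-contained and much more elementary. The paper's route is heavier but delivers the stronger dense-neighborhood conclusion, which your argument does not give.

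Two points should be written out rather than left to ``the usual computation.''

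First, the step you flag as the main obstacle does close. In the case $x_v\ge\max(x_u,x_w)$, take $y_u=y_w=x_v$, keep the other entries of $\mathbf{x}$, and normalize $\|\mathbf{x}\|=1$. One gets $\mathbf{y}^{T}A(G')\mathbf{y}-\lambda\|\mathbf{y}\|^2=\lambda(2x_v^2-x_u^2-x_w^2)+2x_ux_w>0$, with strictness coming from $\mathbf{x}>0$.

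Second, for the equality case you need strict increases where you wrote ``can only increase'' and ``without decreasing.''
\begin{itemize}
\item Adding a bridge between two components keeps the graph $K_{r+1}$-free, since every edge of $K_{r+1}$ with $r\ge 2$ lies in a triangle. It also strictly raises $\lambda$, so every extremal graph is connected.
\item Splitting a part of size at least $2$ in a connected complete multipartite graph with fewer than $r$ parts strictly raises $\lambda$. Alternatively, allow empty parts in Step~2; strict concavity of $t/(\lambda+t)$ on $[0,\infty)$ still applies there.
\end{itemize}
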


Theorem~\ref{Thm:spectral-turan} is a cornerstone of spectral extremal graph theory.  Guiduli's proof is close in spirit to degree-majorization and symmetrization arguments from the classical Tur\'an theory, whereas Nikiforov's proof is based on inequalities connecting the spectral radius with clique counts.  These two viewpoints have continued to shape the subject; see, for example, the spectral symmetrization refinements in \cite{LP23} and the clique-counting inequalities of Bollob\'as and Nikiforov \cite{BN07}.

For $K_{r+1}$-free graphs, Nikiforov \cite{N02} sharpened Theorem \ref{Thm:spectral-turan} to
$\lambda(G)^2 \leq 2e(G) ( 1- 1/r)$,
and obtained spectral versions of the Erd\H{o}s--Stone--Bollob\'as theorem \cite{N09-2}.  Since then, spectral Tur\'an problems have developed in several directions, including local and Perron-vector versions of the spectral radius \cite{LN26,LN-arxiv25,KKP-arxiv25,LSWW-arxiv26}, spectral supersaturation and subgraph counting \cite{BN07,NZ23,NZ25,LLZ26}, and exact spectral extremal results for specific forbidden subgraphs \cite{CFTZ20,WKX23,CDT24}.
In particular, Wang, Kang, and Xue \cite{WKX23} proved that for a graph $F$ satisfying $\ex (n,F) = e(T_r(n))+O(1)$ and for sufficiently large $n$, if $G$ has the maximum spectral radius among all $n$-vertex $F$-free graphs, then $G\in \EX(n,F)$. This is the first result that connects spectral extremal graphs to edge extremal graphs under the condition on the Tur\'an number of graphs.
A recent theorem of Byrne, Desai, and Tait \cite{BDT26+} gives a broad mechanism relating ordinary Tur\'an extremal graphs to spectral Tur\'an extremal graphs.  For further background and open problems, we refer to surveys \cite{N11,LFL22} and problem collection \cite{LN23}.

Our results address three questions at the interface of the classical and spectral theories: a conjecture of Guiduli on dense spectral neighborhoods, and two problems of Nikiforov concerning the extent to which spectral inequalities recover the corresponding Tur\'an statements.
We also give a partial result on an open problem in \cite{ALNS26+}.

\subsection{Guiduli's conjecture}
A classical strengthening of Tur\'an's theorem asks not only whether a graph has too many edges, but whether some neighborhood is already too dense. Erd\H{o}s \cite{E75} conjectured that if $e(G) > e(T_r(n))$, then some vertex $v$ has a neighborhood containing more than $e(T_{r-1}(d(v)))$ edges. He described this, if true, as a ``nice generalization of Tur\'an's theorem.'' The conjecture was proved independently by Bollob\'as and Thomason \cite{BT81} and by Erd\H{o}s and S\'os \cite{ES83}; Bondy \cite{B83} later showed that $v$ can be chosen as a maximum-degree vertex.

\begin{theorem}[\cite{BT81,ES83}]\label{thm:local-turan}
Let $G$ be a graph on $n$ vertices and at least $e(T_r(n))$ edges, where $r\geq 2$.
Then either $G\cong T_r(n)$, or there is a vertex $v$ in $G$ such that
$G[N(v)]$ has more than $e(T_{r-1}(d(v)))$ edges.
\end{theorem}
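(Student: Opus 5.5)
We argue by induction on $r$, combined with a vertex-by-vertex counting argument in the style of Bondy, and we take $v$ to be a vertex of maximum degree $\Delta$. The base case $r=2$ is Mantel's theorem with equality. If $G[N(v)]$ has at least one edge, we are done. Otherwise every neighborhood is independent, so $G$ is triangle-free. Mantel then gives $e(G)\le \lfloor n^2/4\rfloor$, with equality only for $T_2(n)$. Hence $e(G)\ge e(T_2(n))$ forces $G\cong T_2(n)$.

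For the inductive step, suppose $r\ge 3$, $e(G)\ge e(T_r(n))$, $G\not\cong T_r(n)$, and assume for contradiction that every vertex $u$ satisfies $e(G[N(u)])\le e(T_{r-1}(d(u)))$. Fix a vertex $v$ of maximum degree $\Delta$ and set $A=N(v)$, $B=V\setminus A$, so $|B|=n-\Delta$. We count edges as
\[
e(G)=e(G[A])+\bigl(e(A,B)+e(G[B])\bigr)\le e(G[A])+\sum_{b\in B} d(b)\le e(T_{r-1}(\Delta))+(n-\Delta)\Delta .
\]
The middle inequality holds because every edge not inside $A$ has at least one endpoint in $B$. The last step uses the assumption at $v$ together with $d(b)\le\Delta$.

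Next we use the standard fact that $f(\Delta)=e(T_{r-1}(\Delta))+(n-\Delta)\Delta\le e(T_r(n))$ for all $0\le \Delta\le n$. This is the Tur\'an construction: join a copy of $T_{r-1}(\Delta)$ to an independent set of size $n-\Delta$. The result is an $r$-partite graph on $n$ vertices, and balanced partitions maximize $e$. Equality holds only when $\Delta=n-\lceil n/r\rceil$, i.e.\ when $n-\Delta$ is the size of a largest part and the resulting partition is balanced. Combined with $e(G)\ge e(T_r(n))$, all the inequalities above must be tight:
\begin{itemize}
\item[(a)] $\Delta=n-\lceil n/r\rceil$;
\item[(b)] $e(G[A])=e(T_{r-1}(\Delta))$;
\item[(c)] $B$ is independent;
\item[(d)] every $b\in B$ has degree $\Delta$.
\end{itemize}

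The main obstacle is turning these equalities into $G\cong T_r(n)$, and this is where induction enters. By (c) and (d), each $b\in B$ has exactly $\Delta$ neighbors, all lying in $A=N(v)$, which also has size $\Delta$. So $N(b)=A$ for every $b\in B$, i.e.\ $B$ is completely joined to $A$.

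It remains to show $G[A]\cong T_{r-1}(\Delta)$. By (b), $G[A]$ has exactly $e(T_{r-1}(\Delta))$ edges. If $G[A]\not\cong T_{r-1}(\Delta)$, the induction hypothesis applied to $G[A]$ produces $u\in A$ whose neighborhood in $G[A]$ has more than $e(T_{r-2}(d_A(u)))$ edges. Since $B$ is independent, we have $N_G(u)=N_A(u)\cup B$ with $B$ joined to all of $N_A(u)$. Hence
\[
e(G[N_G(u)])=e(G[N_A(u)])+|B|\,d_A(u)>e(T_{r-2}(d_A(u)))+|B|\,d_A(u)\ge e(T_{r-1}(d_A(u)+|B|)) ,
\]
where the last step applies the same Tur\'an-type inequality $f\le e(T_{r-1}(\cdot))$, now in the $(r-1)$-partite setting. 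Since $d_A(u)+|B|=d(u)$, this contradicts our assumption at $u$.

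Therefore $G[A]\cong T_{r-1}(\Delta)$, and $G$ is this copy of $T_{r-1}(\Delta)$ completely joined to an independent set of size $n-\Delta=\lceil n/r\rceil$. This graph is $T_r(n)$, contradicting $G\not\cong T_r(n)$. The remaining checks are the exact equality case of $f(\Delta)\le e(T_r(n))$ and the degenerate case $d_A(u)=0$, which is handled the same way.
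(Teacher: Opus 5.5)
The paper does not prove this theorem. It quotes it from Bollob\'as--Thomason and Erd\H{o}s--S\'os, so your argument has to stand on its own.

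Everything up to ``$B$ is completely joined to $A$'' is fine, except that (a) is misstated. Equality in $f(\Delta)\le e(T_r(n))$ holds iff $n-\Delta\in\{\lfloor n/r\rfloor,\lceil n/r\rceil\}$. Moreover, when $r\nmid n$ we have $\Delta\ge 2e(G)/n\ge 2e(T_r(n))/n>n-\lceil n/r\rceil$, so in fact $|B|=\lfloor n/r\rfloor$. This is harmless, since either way $T_{r-1}(\Delta)\vee\overline{K}_{n-\Delta}\cong T_r(n)$.

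The genuine gap is the lifting step. $T_{r-2}(m)\vee\overline{K}_k$ is a complete $(r-1)$-partite graph on $m+k$ vertices, so the Tur\'an-type fact you cite gives $e(T_{r-2}(m))+km\le e(T_{r-1}(m+k))$. That is the \emph{reverse} of the inequality your display needs. Equality holds only when $k$ is one of the part sizes of $T_{r-1}(m+k)$. Otherwise there is a positive deficit (for $r=3$ it equals $\lfloor (m-k)^2/4\rfloor$), and the single surplus edge supplied by the induction hypothesis need not cover it. The induction hypothesis is a bare existence statement, so it gives you no control over $m=d_A(u)$.

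This failure actually occurs. Take $r=3$, $n=3a+1$, $a\ge 4$. In $K_{a,a+1}$ with parts $P\ni p_1,\dots,p_4$ and $Q\ni u,q_3,q_4,q_5$, delete $up_1,up_2,p_3q_3,p_4q_4$ and add $p_1p_2,p_3p_4,q_3q_4,q_3q_5$. Call the result $F$ and let $G=F\vee\overline{K}_a$.
\begin{itemize}
\item $e(F)=a(a+1)=e(T_2(2a+1))$ and $F$ has maximum degree $a+1$. So a vertex $v$ of $\overline{K}_a$ has maximum degree, and (b)--(d) hold with $A=V(F)$ and $|B|=\lfloor n/r\rfloor$.
\item $F$ contains the triangle $up_3p_4$, so the induction hypothesis may return $u$.
\item But $N_A(u)=P\setminus\{p_1,p_2\}$ spans only the edge $p_3p_4$, and $d_G(u)=2a-2$. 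Hence $e(G[N_G(u)])=1+a(a-2)=(a-1)^2=e(T_2(d_G(u)))$, and no contradiction arises at $u$.
\end{itemize}
Some other vertex, such as $p_3$, does violate the bound, but your argument gives no way to find it.

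Your proof does go through when $r\mid n$. Then $|B|=a=n/r$, and $d_A(u)=d_G(u)-a\le(r-2)a$ for every $u\in A$. Together with $e(G[A])=e(T_{r-1}((r-1)a))$, this forces $G[A]$ to be $(r-2)a$-regular. So $|B|=a$ is a part size of $T_{r-1}(d_G(u))=T_{r-1}((r-1)a)$ for every $u\in A$, and your last step holds with equality. Equivalently, $G$ is $(r-1)a$-regular, i.e.\ $G\in\mathcal{F}_r(n)$, and Lemma~\ref{lem:F-local} would finish directly.

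For $r\nmid n$, write $n=ra+b$. The degrees of $G[A]$ sum to $\Delta(\Delta-a)-b(a+1)$, while each is at most $\Delta-a$. So $G[A]$ need not be regular, and the example above shows its low-degree vertices can be exactly the ones the induction hypothesis hands you. The equality case therefore needs a genuinely additional argument, not a black-box appeal to the case $r-1$.
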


Guiduli \cite{G96} proposed the following spectral analog of this dense-neighborhood theorem.

\begin{conjecture}[Guiduli's spectral dense-neighborhood conjecture \cite{G96}]
Let $G$ be a graph on $n$ vertices and assume that $\lambda(G)\geq \lambda(T_r(n))$.
Then either $G\cong T_r(n)$, or there is a vertex $v$ such that $\lambda(G[N(v)])\geq \lambda(T_{r-1} (d(v)))$.
Furthermore, if $\lambda(G) > \lambda(T_r(n))$, then the conclusion
holds for any vertex $v$ having maximum weight given by a positive
eigenvector for $\lambda(G)$.
\end{conjecture}

Our first main theorem confirms Guiduli's conjecture in a stronger form.

\begin{theorem}\label{Thm:main1}
Let $G$ be a graph on $n$ vertices and assume that $\lambda (G)\geq \lambda (T_r(n))$.
Then either $G\cong T_r(n)$, or there is a vertex $v$ such that $\lambda (G[N(v)]) > \lambda (T_{r-1}(d(v)))$.
Furthermore, if $\lambda (G) > \lambda (T_r(n))$, then the conclusion
holds for any vertex $v$ attaining the maximum entry in any nonnegative Perron eigenvector of $\lambda (G)$.
\end{theorem}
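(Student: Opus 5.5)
The plan is to work directly with a nonnegative Perron eigenvector $\mathbf{x}$ of $A(G)$, normalized so that $\max_w x_w = x_u = 1$, and to prove the second (stronger) assertion for this $u$. The first assertion then follows by treating the boundary case $\lambda(G)=\lambda(T_r(n))$ separately. Write $d=d(u)$, $H=G[N(u)]$ and $\lambda=\lambda(G)$. Two eigen-equations at $u$ give
\[
\lambda=\sum_{v\in N(u)}x_v\le d,\qquad
\lambda^2=\sum_{v\in N(u)}\sum_{w\sim v}x_w
=\mathbf 1_H^{\top}A(H)\mathbf x_H+\sum_{w\notin N(u)}x_w\,|N(w)\cap N(u)| .
\]
I will bound the two pieces separately. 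For the inside piece, Cauchy--Schwarz and the Rayleigh quotient give
\[
\mathbf 1^{\top}A(H)\mathbf x_H\le \lambda(H)\sqrt d\,\|\mathbf x_H\|,
\qquad \|\mathbf x_H\|^2\le\sum_{v\in N(u)}x_v=\lambda ,
\]
using $0\le x_v\le 1$. For the outside piece, each of the $n-d$ vertices $w\notin N(u)$ (including $u$ itself) contributes at most $x_w\min\{d,d(w)\}$. I will refine the crude bound $d(n-d)$ by also using $\sum_w x_w d(w)=\lambda\sum_w x_w$.

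Suppose, for contradiction, that $\lambda(H)\le\lambda(T_{r-1}(d))$. I then want to show $\lambda\le\lambda(T_r(n))$, with equality only for $G\cong T_r(n)$. To compare with exact Turán values rather than $(1-1/r)n$, I plan to use the closed form of $\lambda(T_r(n))$ as the largest root of the secular equation $\sum_i \frac{n_i}{\lambda+n_i}=1$. I will also use the identity that $T_r(n)$ itself satisfies every inequality above with equality, taking $u$ in a smallest part. The natural route is to define the function
\[
\Phi(d,\lambda)=\lambda^2-d(n-d)-\lambda(T_{r-1}(d))\sqrt{d\lambda}
\]
and show it is negative whenever $\lambda>\lambda(T_r(n))$ and $\lambda\le d\le n$. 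This step uses monotonicity of $\lambda(T_{r-1}(d))$ in $d$ and a direct comparison of $T_{r-1}(d)$ joined to an independent set of size $n-d$ with $T_r(n)$. The comparison is really a spectral version of the statement that among complete $r$-partite-like configurations the balanced one maximizes $\lambda$.

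I expect this inequality to be the main obstacle. The crude bounds already lose too much: they only give $\lambda^2\le dn-d^2/(r-1)$, which is far from $(1-1/r)^2n^2$. Tightness therefore requires coupling the two pieces. The coupling comes from the fact that vertices outside $N(u)$ with large $x_w$ must themselves have many neighbours, and from using $\lambda\le d$ more cleverly. My first attempt is a second Perron-weighted count: apply the same identity at a neighbour $v\in N(u)$ and average over $N(u)$ with weights $x_v$. This should replace $d(n-d)$ by $\lambda\bigl(\sum_w x_w-\lambda\bigr)$ and make the extremal calculation sharp.

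Finally, for the boundary case $\lambda=\lambda(T_r(n))$, I trace equality through every step. Equality forces $x_v=1$ on $N(u)$ and $N(w)\supseteq N(u)$ for every $w\notin N(u)$. It also forces $\lambda(H)=\lambda(T_{r-1}(d))$ with $H$ extremal, so $H\cong T_{r-1}(d)$ by Theorem~\ref{Thm:spectral-turan} in its equality form. These facts together rebuild $G\cong T_r(n)$. Hence if $G\not\cong T_r(n)$, the maximum-weight vertex $u$ satisfies the strict inequality $\lambda(H)>\lambda(T_{r-1}(d(u)))$.
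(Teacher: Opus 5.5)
\paragraph{There is a genuine gap at the central step, and the boundary case is also wrong.}

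\textbf{The key inequality is false.} Under the hypothesis $\lambda(H)\le\lambda(T_{r-1}(d))$, your chain of bounds gives $\Phi(d,\lambda)\le 0$. For a contradiction you would therefore need $\Phi(d,\lambda)>0$ (not ``negative'') on the whole region $\lambda(T_r(n))<\lambda\le d\le n$, and this fails.
\begin{itemize}
\item For $r=3$, $n=1500$, $d=1200$, $\lambda=1005>1000=\lambda(T_3(1500))$, one gets $\Phi=1005^2-360000-600\sqrt{1206000}\approx-8.9\cdot10^3$.
\item For $r=3$, $n=4$, $d=3$, $\lambda=2.6>(1+\sqrt{17})/2=\lambda(T_3(4))$, one gets $\Phi\approx-0.19$.
\end{itemize}

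\textbf{Why it fails.} Part of the loss is your Cauchy--Schwarz step: $\lambda(H)\sqrt{d\lambda}\ge\lambda(H)\lambda$, and this already breaks the regular example above. The more fundamental loss is that you retain only the number $\lambda(H)$. Even the sharp estimate
$\lambda(G)\le\lambda(\overline{K}_{n-d}\vee H)\le\frac12\bigl(\lambda(H)+\sqrt{\lambda(H)^2+4d(n-d)}\bigr)$
from Lemma~\ref{lem:join-coronal}(2) fails when $T_{r-1}(d)$ is irregular. For $n=4$, $d=3$, $\lambda(H)=\sqrt2$ it gives $\frac12(\sqrt2+\sqrt{14})\approx2.578>2.562\approx\lambda(T_3(4))$. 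Your ``second Perron-weighted count'' is only a hope, and the bound it is supposed to yield is not justified.

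\textbf{The missing ingredient is the edge count of $H$.} The paper first symmetrizes to get $\lambda(G)\le\lambda(\overline{K}_s\vee H)$. It then proves that $\lambda(\overline{K}_s\vee H)>\lambda(\overline{K}_s\vee T_{r-1}(d))$ forces $\lambda(H)>\lambda(T_{r-1}(d))$ (Theorem~\ref{Thm:join-preserve-spectral-inequality}), by splitting on $e(H)$:
\begin{itemize}
\item If $e(H)\ge e(T_{r-1}(d))$, Theorem~\ref{Thm:Prob1} (enough edges force large spectral radius) settles it, together with its equality case via Lemma~\ref{lem:F-join-equality}.
\item Otherwise, the coronal estimate of Lemma~\ref{lem:coronal-bound} gives strictness when combined with the explicit computation in Lemma~\ref{lem:turan-quotient}. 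That estimate involves the average degree of $H$, not just $\lambda(H)$.
\end{itemize}

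\textbf{The boundary case $\lambda(G)=\lambda(T_r(n))$.} You cannot invoke the equality case of Theorem~\ref{Thm:spectral-turan} for $H$, because $H$ is not known to be $K_r$-free. For $r\ge3$, $\lambda(H)=\lambda(T_{r-1}(d))$ holds for every graph in $\mathcal F_{r-1}(d)$, and most of these are not Turán graphs.

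Your conclusion that the maximum-weight vertex itself works is in fact false. Take $G=\overline{K_3\sqcup C_6}$:
\begin{itemize}
\item $G$ is connected and $6$-regular on $9$ vertices, so $\lambda(G)=6=\lambda(T_3(9))$ and the Perron vector is constant.
\item $G\not\cong T_3(9)$.
\item For a vertex $u$ in the $K_3$-component of $\overline G$, $G[N(u)]=\overline{C_6}$ is the $3$-regular prism, so $\lambda(G[N(u)])=3=\lambda(T_2(6))$, which is not strict.
\end{itemize}

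The paper handles this case differently. It shows that equality forces $G=\overline{K}_s\vee H\in\mathcal F_r(n)$. Lemma~\ref{lem:F-local} then finds a possibly different vertex $u$ with $e(G[N(u)])>e(T_{r-1}(d(u)))$, and Corollary~\ref{coro:Prob1} turns this edge inequality into the spectral one.
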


Theorem~\ref{Thm:main1} strengthens the spectral Tur\'{a}n theorem in the same way that Theorem~\ref{thm:local-turan} strengthens Tur\'{a}n's theorem. Indeed, if $G$ is $K_{r+1}$-free, then every neighborhood $G[N(v)]$ is $K_r$-free.  Applying Theorem~\ref{Thm:spectral-turan} within each neighborhood gives
$\lambda(G[N(v)])\leq \lambda(T_{r-1}(d_G(v)))$
for every $v\in V(G)$.
Thus, Theorem~\ref{Thm:main1} rules out $\lambda(G)>\lambda(T_r(n))$ and recovers Theorem~\ref{Thm:spectral-turan}.

We shall prove the following stronger statement, which is the main technical step behind Theorem~\ref{Thm:main1}.

\begin{theorem}\label{Thm:join-preserve-spectral-inequality}
Let $r\geq 1$, $s\geq 1$ be integers, and
let $G$ be a graph on $n$ vertices.
The following conclusions hold:
\begin{enumerate}
\item[$(1)$] If
$\lambda(\overline{K}_s\vee G)\geq \lambda(\overline{K}_s\vee T_r(n))$,
then
$\lambda(G)\geq \lambda(T_r(n))$.
Moreover, $\lambda(G) = \lambda(T_r(n))$, then $G\in\mathcal{F}_r(n)$ and $\lambda(\overline{K}_s\vee G) = \lambda(\overline{K}_s\vee T_r(n))$.

\item[$(2)$] If
$\lambda(\overline{K}_s\vee G) > \lambda(\overline{K}_s\vee T_r(n))$,
then
$\lambda(G) > \lambda(T_r(n))$.

\item[$(3)$] If $\lambda(\overline{K}_s\vee G) = \lambda(\overline{K}_s\vee T_r(n))$ and $\lambda(G) = \lambda(T_r(n))$, then $G\in\mathcal{F}_r(n)$.
\end{enumerate}
\end{theorem}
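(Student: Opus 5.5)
The plan is to reduce everything to a comparison of walk generating functions. Write $A=A(G)$, $\mu=\lambda(\overline{K}_s\vee G)$ and $\mathbf 1$ for the all-ones vector. The join is connected, so its Perron vector is positive. By symmetry it is constant, say $y$, on the $s$ vertices of $\overline{K}_s$; write $x$ for its restriction to $G$. The eigen-equations read $\mu y=\mathbf 1^{T}x$ and $\mu x=Ax+sy\mathbf 1$. Since $\mu>\lambda(G)$ (the join contains $G$ properly and is connected), we get $x=sy(\mu I-A)^{-1}\mathbf 1$. Hence $\mu$ is the largest root, in $(\lambda(G),\infty)$, of
\[
\mu=s\,W_G(\mu),\qquad W_G(t):=\mathbf 1^{T}(tI-A)^{-1}\mathbf 1=\sum_{k\ge 0}\frac{w_k(G)}{t^{k+1}}=\sum_i\frac{(\mathbf 1^{T}u_i)^2}{t-\lambda_i}.
\]
Here $w_k(G)$ is the number of walks of length $k$, and $u_i$ are orthonormal eigenvectors of $A$. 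The function $sW_G(t)-t$ is strictly decreasing on $(\lambda(G),\infty)$. Therefore $\lambda(\overline{K}_s\vee G)\ge\lambda(\overline{K}_s\vee T_r(n))=:\mu_T$ holds if and only if $W_G(\mu_T)\ge W_{T_r(n)}(\mu_T)$. All three parts then become statements comparing $W_G$ and $W_{T_r(n)}$ at the single point $\mu_T$.

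Next I would compute $W_{T_r(n)}$ explicitly through the equitable partition of $T_r(n)$ into parts of sizes $\lceil n/r\rceil$ and $\lfloor n/r\rfloor$. Only two main eigenvalues occur: $\lambda(T_r(n))$ and one negative eigenvalue, the latter only when $r\nmid n$. This gives $W_{T_r(n)}$ as an explicit rational function, and $\mu_T$ as the root of an explicit cubic.

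For $G$ I use $w_k(G)\le n\lambda(G)^k$, which gives $W_G(t)\le n/(t-\lambda(G))$, with equality if and only if $G$ is regular. For the contrapositive of (1) and for (2), assume $\lambda(G)<\lambda(T_r(n))$ and aim to show $W_G(\mu_T)<W_{T_r(n)}(\mu_T)$. When $r\mid n$ this is immediate: $T_r(n)$ is regular, so $W_{T_r(n)}(t)=n/(t-\lambda(T_r(n)))$.

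When $r\nmid n$, the bound $n/(t-\lambda(G))$ is too crude for $\lambda(G)$ close to $\lambda(T_r(n))$. Here I would use a second-order refinement. Write $W_G(t)$ as $n/(t-\lambda_1)$ minus a correction coming from the non-main part of $\mathbf 1$. Control that correction with $w_1=2e(G)$ and $w_2=\sum_v d(v)^2$, using that the other eigenvalues have total square $2e(G)-\lambda(G)^2$. If regularity is forced, exploit integrality: a regular $G$ has integer $\lambda(G)$, whereas $\lambda(T_r(n))$ is typically irrational. The equality analysis should produce the family $\mathcal F_r(n)$, which I expect consists of the graphs whose main spectral data, namely $\lambda(G)$ and the projection profile $(\mathbf 1^{T}u_i)^2$ at the relevant point, reproduce $W_{T_r(n)}(\mu_T)$. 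Part (3) then follows by tracing equality in each inequality used. Part (2) uses the same chain with strict inequalities, together with the strict monotonicity of $sW_G(t)-t$.

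I expect the main obstacle to be this non-divisible case. There I must show that no graph with $\lambda(G)\le\lambda(T_r(n))$ can beat the irregular Turán graph's walk generating function at $\mu_T$. If the second-moment refinement falls short, I would try a perturbation or symmetrization argument on the Perron vector of the join. This would show that among graphs with $\lambda(G)\le\lambda(T_r(n))$, the quantity $\mathbf 1^{T}x$ is maximized by the Turán structure itself.
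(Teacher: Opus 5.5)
\textbf{Gap: the case $r\nmid n$ is not proved, and the tools you list cannot close it.} Your reduction to the walk generating function is exactly the paper's coronal characterization (Lemma~\ref{lem:join-coronal}). Your treatment of $r\mid n$ is complete: $W_G(t)\le n/(t-\lambda(G))$, with equality only for regular $G$, gives all three parts there, with $\mathcal F_r(n)$ being the $(r-1)a$-regular graphs. For $r\nmid n$, however, your proposal is a list of possible strategies.

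The second-order refinement fails for the following reason. Take a spectral profile with $\lambda_1$ just below $\lambda(T_r(n))$, almost all of the weight $\sum_i(\mathbf 1^{\mathrm T}u_i)^2=n$ on $\lambda_1$, and average degree just below $\lambda_1$. Such a profile is compatible with everything you propose to use:
\begin{itemize}
\item $w_1$, and also $w_2\approx n\,d(G)^2$;
\item $|\lambda_i|\le\lambda_1$;
\item the trace identity $\sum_{i\ge2}\lambda_i^2=2e(G)-\lambda(G)^2$, which constrains the eigenvalues but not the weights $(\mathbf 1^{\mathrm T}u_i)^2$.
\end{itemize}
For this profile $d(G)>d_T:=2e(T_r(n))/n$, since $d_T<\lambda(T_r(n))$ when $r\nmid n$. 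Yet $W_G(\mu_T)$ is close to $n/(\mu_T-\lambda(T_r(n)))$, which is strictly larger than $W_{T_r(n)}(\mu_T)$.

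What excludes such graphs is a combinatorial fact, not a moment bound: $e(G)\ge e(T_r(n))$ forces $\lambda(G)\ge\lambda(T_r(n))$. You cannot avoid this fact. As $s\to\infty$ you have $\mu_T\to\infty$, and $W_G(t)-W_{T_r(n)}(t)=2\bigl(e(G)-e(T_r(n))\bigr)/t^2+O(t^{-3})$. So part (2) for all $s$ already implies Corollary~\ref{coro:Prob1}, and any proof must contain such an argument.

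Three further points are not covered:
\begin{itemize}
\item Your integrality remark about regular graphs misses the real obstruction, which is irregular, nearly regular graphs. ``Typically irrational'' is not an argument.
\item The equality family $\mathcal F_r(n)$ is only guessed, not derived.
\item ``Same chain with strict inequalities'' does not handle $\lambda(G)=\lambda(T_r(n))$ in part (2). There the needed inequality $W_G(\mu_T)\le W_{T_r(n)}(\mu_T)$ is sharp, being attained by every member of $\mathcal F_r(n)$.
\end{itemize}

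\textbf{How the paper closes it: a split on the edge count.} Assume $\lambda(G)<\mu_T$ and $\rho\ge\mu_T$.
\begin{itemize}
\item If $e(G)\le e(T_r(n))-1$, then $d(G)\le d_0=d_T-2/n$. The first-moment bound $\chi_G(x)\le n(x+d)/(x^2-\lambda(G)^2)$ follows from $1/(x-\lambda_i)\le(x+\lambda_i)/(x^2-\lambda(G)^2)$. Combined with the explicit estimate of Lemma~\ref{lem:turan-quotient}, which uses $\lambda^2-L\lambda-M=0$ and exploits the integrality gap $2/n$, it gives $\lambda(G)>\lambda(T_r(n))$ strictly. Without that gap the bound fails exactly at $d=d_T$.
\item If $e(G)\ge e(T_r(n))$, the paper invokes Theorem~\ref{Thm:Prob1}. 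That theorem is proved with the test vector $x_v=(\lambda_0+d_{\overline G}(v)+1)^{-1}$ and the identity $\bm{x}^{\mathrm T}A(G)\bm{x}=\lambda_0\|\bm x\|^2+S(S-1)+\sum_{uv\in E(\overline G)}(x_u-x_v)^2$.
\end{itemize}
The equality case of Theorem~\ref{Thm:Prob1} is where $\mathcal F_r(n)$ actually comes from. Lemma~\ref{lem:F-join-equality}, via a common equitable quotient matrix, then gives $\rho=\mu_T$ for every member of $\mathcal F_r(n)$. Your plan needs both this edge-count dichotomy and an edge-to-spectrum theorem of this kind.
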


\subsection{An open problem of Nikiforov}
Theorem~\ref{Thm:spectral-turan}, together with $\lambda(G)\ge 2e(G)/n$, immediately implies the usual Tur\'an theorem.  It is therefore natural to ask how much information is lost when one passes from the edge count to the spectral radius. Sometimes the implication can also be reversed in a useful way: Stanley's upper bound for the spectral radius in terms of the number of edges \cite{Stanley87} can be recovered from Tur\'an's theorem, a point made explicit by Nikiforov \cite{N20} after similar work of Hansen and Lucas \cite{HL10}.  In this spirit, Nikiforov \cite{N09} asked whether the exact Tur\'an edge threshold is detected by the exact spectral Tur\'an threshold.

\begin{problem}[Nikiforov \cite{N09}]\label{Prob:Main1}
Suppose that $G$ is a graph on $n$ vertices. Is it true that if $\lambda(G)<\lambda(T_r(n))$, then $e(G)<e(T_r(n))$?
\end{problem}

We answer Problem~\ref{Prob:Main1} positively in a stronger form.

\begin{theorem}\label{Thm:Prob1}
Let $G$ be a graph on $n$ vertices. If $e(G)\geq e(T_r(n))$,
then $\lambda(G)\geq \lambda(T_r(n))$. Moreover, if $e(G)\geq e(T_r(n))$ and
$\lambda(G) = \lambda(T_r(n))$,
then $G\in\mathcal{F}_r(n)$. Conversely, every $G\in\mathcal{F}_r(n)$ satisfies $e(G)=e(T_r(n))$ and $\lambda(G) = \lambda(T_r(n))$.
\end{theorem}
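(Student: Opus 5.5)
The plan is a direct lower bound on $\lambda(G)$ in terms of the degree sequence, combined with an extremal statement about degree sequences with $e(G)\ge e(T_r(n))$ edges. The comparison lemma in the second step is not yet proved; it is the crux.

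\textbf{Step 1 (the easy case $r\mid n$).} Here $T_r(n)$ is regular of degree $d_0=n-n/r$. The Rayleigh quotient with the all-ones vector gives
\[
\lambda(G)\ge \frac{2e(G)}{n}\ge \frac{2e(T_r(n))}{n}=d_0=\lambda(T_r(n)).
\]
If equality holds throughout, then $\mathbf 1$ is a Perron vector. So $G$ is $d_0$-regular with exactly $e(T_r(n))$ edges, which should be precisely the description of $\mathcal F_r(n)$ in this case.

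\textbf{Step 2 (general case: reduce to a degree-sequence statement).} For arbitrary $n$, write $n=kr+q$ with $0\le q<r$. Then $T_r(n)$ has two degree classes, $n-k$ and $n-k-1$, and its Perron vector is constant on each class; $\lambda(T_r(n))$ is the largest root of an explicit $2\times 2$ quotient-matrix equation. I will use a two-parameter test vector on $G$, namely $x_v=\alpha$ or $\beta$ according to a threshold on $d_G(v)$, or more generally $x_v=1+t\,(d_G(v)-\bar d)$, together with the Hofmeister-type bound $\lambda(G)^2\ge \frac1n\sum_v d_G(v)^2$. The Hofmeister bound is already exact for complete bipartite graphs: for example, $K_{2,3}$ gives $\sqrt6$. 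The aim is a lower bound $\lambda(G)\ge \Phi(d_1,\dots,d_n)$ with two properties:
\begin{itemize}
\item $\Phi$ is Schur-convex, or at least nondecreasing under the operations that increase the number of edges;
\item $\Phi$ equals $\lambda(T_r(n))$ on the Turán degree sequence.
\end{itemize}
If such a $\Phi$ exists, the inequality follows once I show that, among degree sequences of $n$-vertex graphs with at least $e(T_r(n))$ edges, $\Phi$ is minimised by the most balanced one. That sequence has degrees as equal as possible; when $r\nmid n$ this is not Turán's sequence itself, so the comparison must be made carefully.

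\textbf{Step 3 (the main obstacle: the comparison lemma).} The hard step is the comparison lemma. The naive bound $\Phi=\sqrt{\tfrac1n\sum d_v^2}$ is too weak for $r\ge3$ when $q\ne0$, because $\lambda(T_r(n))$ strictly exceeds the root-mean-square degree there. The case split I would attempt first is as follows.
\begin{itemize}
\item If $G$ has a vertex whose neighbourhood contains more than $e(T_{r-1}(d(v)))$ edges (Theorem~\ref{thm:local-turan}, taking $v$ of maximum degree by Bondy), proceed by induction on $r$. This yields $\lambda(G[N(v)])\ge\lambda(T_{r-1}(d(v)))$, and I would then try to lift it via Theorem~\ref{Thm:join-preserve-spectral-inequality} by comparing $G$ with $\overline K_{s}\vee G[N(v)]$, where $s=n-d(v)$.
\item Otherwise $G\cong T_r(n)$, which is the trivial case.
\end{itemize}
The delicate point is that $G$ need not contain $\overline K_s\vee G[N(v)]$. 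One must therefore control the edges outside $N(v)$ using the surplus $e(G)-e(T_r(n))$, and this is where I expect most of the work to lie.

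\textbf{Step 4 (equality and the converse).} In the equality case $\lambda(G)=\lambda(T_r(n))$, every inequality in the chain must be tight. Tightness forces the test vector to be a Perron vector and the degree sequence to be extremal, which should pin $G$ down to $\mathcal F_r(n)$. The converse statement, that every $G\in\mathcal F_r(n)$ has $e(G)=e(T_r(n))$ and $\lambda(G)=\lambda(T_r(n))$, is a direct computation from the definition of $\mathcal F_r(n)$, using the quotient matrix of the equitable partition.
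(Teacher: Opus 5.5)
\textbf{There is a genuine gap.} Your Step~1 and the converse in Step~4 are correct. For $r\nmid n$, however, the proposal proves nothing. Step~2 only postulates a lower bound $\Phi$, and the route in Step~3 cannot produce the inequality, for reasons that go beyond the containment issue you flag.
\begin{enumerate}
\item[(i)] Theorem~\ref{Thm:join-preserve-spectral-inequality} passes a comparison \emph{down}, from $\overline K_s\vee G$ to $G$. You need the reverse implication $\lambda(G[N(v)])\ge\lambda(T_{r-1}(d(v)))\Rightarrow\lambda(\overline K_s\vee G[N(v)])\ge\lambda(\overline K_s\vee T_{r-1}(d(v)))$. That is essentially Problem~\ref{Prob:Main3}, which fails in general; the paper only establishes it when the larger graph is regular.
\item[(ii)] The available comparison between $G$ and $\overline K_s\vee G[N(v)]$ is Guiduli's symmetrization. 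It is valid only for a vertex maximising the Perron vector, not for a maximum-degree vertex, and it gives $\lambda(G)\le\lambda(\overline K_s\vee G[N(v)])$. That is an upper bound on $\lambda(G)$, the wrong direction for you.
\item[(iii)] $\overline K_s\vee T_{r-1}(d(v))$ is a complete $r$-partite graph on $n$ vertices. Its spectral radius is therefore at most $\lambda(T_r(n))$, with equality only if it is $T_r(n)$. Even a fully lifted chain would end below the target.
\item[(iv)] In the paper, Theorem~\ref{Thm:join-preserve-spectral-inequality} is itself proved from the present theorem (Case~2, $d>d_0$), so using it here is circular.
\end{enumerate}
Consequently the equality analysis in Step~4 has no inequality chain to rest on.

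The missing idea is the right test vector; once you have it, your Step~2 reduction works as intended. Put $H=\overline G$, $c_v=d_H(v)$, $\lambda_0=\lambda(T_r(n))$ and $x_v=(\lambda_0+c_v+1)^{-1}$. On $T_r(n)$ this is exactly the Perron vector, whose entries are $(\lambda_0+\text{part size})^{-1}$ up to scaling. Write $A(G)=J-I-A(H)$ and use $(\lambda_0+c_v+1)x_v^2=x_v$. This gives the exact identity
\[
\bm{x}^{\mathrm T}A(G)\bm{x}=\lambda_0\|\bm{x}\|^2+S(S-1)+\sum_{uv\in E(H)}(x_u-x_v)^2,\qquad S=\sum_v x_v,
\]
so $\lambda(G)\ge\lambda_0$ as soon as $S\ge 1$.

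The map $c\mapsto(\lambda_0+c+1)^{-1}$ is decreasing and convex. Hence, over integer sequences with $\sum_v c_v\le 2e(\overline{T_r(n)})$, $S$ is minimised exactly at the balanced sequence, where $S=1$ by Lemma~\ref{lem:complete-r-partite}. Contrary to your remark, this balanced sequence \emph{is} the Tur\'an one: as you note yourself, the degrees of $T_r(n)$ are $n-k-1$ and $n-k$, which differ by one. This is also why Hofmeister's bound already gives the inequality for $r=2$, since $K_{k,k+1}$ attains it.

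Equality then forces two things. First, $S=1$, hence the complement-degree multiset of $T_r(n)$. Second, $x_u=x_v$ on every edge of $H$, hence no $H$-edges between the two degree classes. Together these are precisely the definition of $\mathcal F_r(n)$.
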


Equivalently, we prove the contrapositive: every $n$-vertex graph with at least $e(T_r(n))$ edges has spectral radius at least $\lambda(T_r(n))$.  The proof also determines all equality cases, which form an explicit family $\mathcal F_r(n)$ introduced in Section~\ref{Sec:2}.

The following corollary is Proposition 4.20 in \cite{G96}.
\begin{corollary}\label{coro:Prob1}
Let $G$ be a graph on $n$ vertices. If $e(G) > e(T_r(n))$,
then $\lambda(G) > \lambda(T_r(n))$.
\end{corollary}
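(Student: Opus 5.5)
We deduce Corollary~\ref{coro:Prob1} from Theorem~\ref{Thm:Prob1} by excluding the equality case. Suppose $e(G)>e(T_r(n))$. In particular $e(G)\ge e(T_r(n))$, so Theorem~\ref{Thm:Prob1} gives $\lambda(G)\ge\lambda(T_r(n))$. It remains to rule out $\lambda(G)=\lambda(T_r(n))$.

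Assume for contradiction that $\lambda(G)=\lambda(T_r(n))$. The hypotheses of the second part of Theorem~\ref{Thm:Prob1} then hold, namely $e(G)\ge e(T_r(n))$ together with spectral equality. That part yields $G\in\mathcal{F}_r(n)$. The converse part of Theorem~\ref{Thm:Prob1} states that every member of $\mathcal{F}_r(n)$ has exactly $e(T_r(n))$ edges. Hence $e(G)=e(T_r(n))$, contradicting $e(G)>e(T_r(n))$. Therefore $\lambda(G)>\lambda(T_r(n))$.

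The argument is purely formal once Theorem~\ref{Thm:Prob1} is available. The only point to check is that the converse clause of that theorem (that $e(G)=e(T_r(n))$ on $\mathcal{F}_r(n)$) is genuinely part of the statement. It is, so there is no real obstacle: all the substance lies in Theorem~\ref{Thm:Prob1} and its characterization of the equality family $\mathcal{F}_r(n)$.
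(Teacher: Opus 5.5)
Your proposal is correct and follows the paper's proof essentially verbatim: get $\lambda(G)\ge\lambda(T_r(n))$ from Theorem~\ref{Thm:Prob1}, and in the equality case use $G\in\mathcal{F}_r(n)$ together with $e(G)=e(T_r(n))$ on $\mathcal{F}_r(n)$ to contradict $e(G)>e(T_r(n))$. The only unaddressed case is $n<r$, where $\mathcal{F}_r(n)$ is undefined, but there the hypothesis $e(G)>e(T_r(n))=\binom{n}{2}$ cannot hold, so nothing is lost.
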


\begin{proof}
Assume $e(G) > e(T_r(n))$. By Theorem \ref{Thm:Prob1}, we have $\lambda(G)\geq\lambda(T_r(n))$. If $\lambda(G)>\lambda(T_r(n))$, then the desired conclusion follows immediately. Hence, we may assume for contradiction that $e(G) > e(T_r(n))$ and $\lambda(G) = \lambda(T_r(n))$. By Theorem \ref{Thm:Prob1}, we obtain $G\in\mathcal{F}_r(n)$. It follows that $e(G)=e(T_r(n))$, a contradiction with $e(G) > e(T_r(n))$.
\end{proof}

\subsection{Spectral bounds for clique numbers and independence numbers}
For a graph on $n$ vertices, $\omega(G)$ denotes its clique number and $\lambda_n(G)$ denotes the least eigenvalue of its adjacency matrix.  Bounds for clique and independence numbers in terms of eigenvalues go back at least to Hoffman's ratio bound \cite{H70}; they were developed further through Wilf's bounds \cite{W86}, Haemers' interlacing method \cite{Haemers95}, work of Alon and Sudakov on the smallest eigenvalue and bipartite subgraphs \cite{AS00}, and Laplacian variants such as the theorem of Lu, Liu, and Tian \cite{LLT}.  Nikiforov's next theorem is a sharp nonregular adjacency-eigenvalue bound of this type.

\begin{theorem}[Nikiforov \cite{N09}]
 Let $G$ be a graph on $n$ vertices and $m$ edges, and let $d(G)=2m/n$. Then
 \begin{equation}
\label{eq:least-eigenvalue-bound}
  \omega(G)
  \ge
  1+
  \frac{2m}{\bigl(n-d(G)\bigr)
  \bigl(d(G)-\lambda_n(G)\bigr)}.
\end{equation}
\end{theorem}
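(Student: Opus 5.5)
The goal is Nikiforov's bound \eqref{eq:least-eigenvalue-bound}. I will use the Motzkin--Straus theorem together with a suitably shifted quadratic form. Write $A=A(G)$, $\lambda_n=\lambda_n(G)$, $d=d(G)$, $\omega=\omega(G)$, and let $\mathbf{j}$ be the all-ones vector.

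By Motzkin--Straus, every vector $x\ge 0$ with $\sum_i x_i=1$ satisfies
\[
x^{\top}Ax\le 1-\frac1{\omega}.
\]
So it suffices to find one such $x$ for which $x^{\top}Ax$ is large. The natural candidate is the uniform vector $x=\mathbf{j}/n$, which gives $x^{\top}Ax=2m/n^2=d/n$. This yields only the weak bound $\omega\ge n/(n-d)$, which makes no use of $\lambda_n$. To bring in the least eigenvalue I will instead work with the matrix $A-\lambda_n I$, which is positive semidefinite.

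The key step is to perturb the uniform vector in a direction supported on a single vertex or a single edge, and to use positive semidefiniteness to control the error. Concretely, I will show that the Motzkin--Straus maximum of $x^{\top}Ax$ over the simplex is at least
\[
\frac{d}{n}+\frac{(\text{gain})^2}{\text{curvature}}.
\]
Here the gain is the linear term obtained by moving mass along a direction orthogonal to $\mathbf{j}$. The curvature is bounded above, using $v^{\top}(A-\lambda_n I)v\ge 0$, by quantities involving $d-\lambda_n$.

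An alternative route avoids perturbation. Consider the cone of nonnegative vectors and apply Motzkin--Straus in its homogeneous form,
\[
\Bigl(1-\frac1{\omega}\Bigr)(\mathbf{j}^{\top}x)^2\ge x^{\top}Ax .
\]
Rewrite $x^{\top}Ax$ as $x^{\top}(A-\lambda_n I)x+\lambda_n\|x\|^2$. Then choose $x$ to be the degree vector shifted by a constant, $x_i=d_i-\lambda_n$, which is nonnegative since $d_i\ge \delta\ge -\lambda_n$ typically fails... Since that positivity is unclear, I will instead take $x=\mathbf{j}$ and combine two inequalities: the Motzkin--Straus bound applied to the complement-type matrix $J-I-A$, and the positive semidefiniteness of $A-\lambda_n I$ applied to the vector $\mathbf{j}$ projected to $\mathbf{j}^{\perp}$.

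The target inequality rearranges to
\[
\frac1{\omega-1}\le \frac{(n-d)(d-\lambda_n)}{nd}.
\]
I would verify this by checking equality cases first: both sides agree for regular complete multipartite graphs, where $\lambda_n=-n/r$ and $d=n(r-1)/r$. That check guides which vector to pick. The main obstacle is choosing the test vector so that $\lambda_n$ enters with exactly this weight.

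If the direct construction stalls, my fallback is Nikiforov's own approach in \cite{N09}, adapting Hoffman's ratio bound to the complement. One applies a weighted Hoffman-type argument to an independent set of $\overline G$ via the vector $\mathbf{j}$, and then uses the identity $A(\overline G)=J-I-A$ to transfer the eigenvalue information from $G$ to $\overline G$.
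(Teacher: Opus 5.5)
The paper does not prove this statement; it quotes it from Nikiforov and uses it as a black box in Section~4. So your proposal has to stand on its own, and as written it is not a proof. You correctly isolate the crux (``choosing the test vector so that $\lambda_n$ enters with exactly this weight'') but never resolve it, and the concrete routes you float cannot resolve it. Your rearrangement and your equality check on $K_{a,\dots,a}$ are correct.

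\textbf{Why your routes fail.} The obstruction already appears for regular graphs. If $G$ is $d$-regular, then every vector $p(A)\mathbf{j}$ is a multiple of $\mathbf{j}$. Hence your shifted degree vector $(A-\lambda_nI)\mathbf{j}$ is just the uniform vector; incidentally, its nonnegativity is trivial, since $\lambda_n\le 0$. For $v\perp\mathbf{j}$ the linear term $\mathbf{j}^{\mathrm T}Av=d\,\mathbf{j}^{\mathrm T}v$ vanishes, so your ``gain'' is $0$. Positive semidefiniteness of $A-\lambda_nI$ only gives $v^{\mathrm T}Av\ge\lambda_n\|v\|^2$, which is the wrong direction to ever produce an increase. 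A direct computation shows that, for regular $G$, any simplex vector of the form $c\mathbf{j}$ plus a part supported on one vertex or one edge satisfies $x^{\mathrm T}Ax\le\max\{d/n,1/2\}$. So your test vectors certify at most $\omega\ge\max\{n/(n-d),2\}$, which is strictly weaker than the target. For $K_3\,\square\,K_3$ ($n=9$, $d=4$, $\lambda_n=-2$) the target reads $\omega\ge 1+36/30=2.2$ and forces a triangle, which none of your vectors can detect. Your third route is vacuous, because the projection of $\mathbf{j}$ onto $\mathbf{j}^{\perp}$ is $0$. The fallback is only a pointer to the original paper.

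\textbf{The missing idea: localize.} Localizing to a neighbourhood is also where the leading ``$1+$'' comes from.
\begin{enumerate}
\item For any $u$ with $d_u>0$, Motzkin--Straus applied to $G[N(u)]$ with the uniform vector gives $\omega(G)\ge 1+\omega(G[N(u)])\ge 1+d_u^2/(d_u^2-2e(N(u)))$.
\item Since $\sum_u 2e(N(u))=\mathrm{tr}(A^3)$, some $u$ has $2e(N(u))/d_u^2\ge\tau:=\mathrm{tr}(A^3)/\sum_u d_u^2$. Hence $\omega\ge 1+1/(1-\tau)$.
\item Now $\lambda_n$ enters through the triangle count. With $\theta=-\lambda_n$, we have $\mathrm{tr}(A^3)=\sum_i\lambda_i^2(\lambda_i+\theta)-\theta\sum_i\lambda_i^2\ge\lambda_1^2(\lambda_1+\theta)-\theta nd$.
\item The needed inequality is $\tau\ge 1-(n-d)(d+\theta)/(nd)$, which is trivial if the right side is $\le 0$. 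Otherwise, using $\sum_u d_u^2=\|A\mathbf{j}\|^2\le n\lambda_1^2$, it reduces to $f(\lambda_1)\ge 0$, where $f(x)=x^3+\theta x^2-\theta nd-nx^2+x^2(n-d)(d+\theta)/d$.
\item One checks $f(d)=0$ and $f'(x)=x\bigl(3x-2d+2\theta+2(n-d)\theta/d\bigr)>0$ for $x\ge d$. Since $\lambda_1\ge d$, this gives $f(\lambda_1)\ge 0$.
\end{enumerate}
For regular graphs this is exactly $\mathrm{tr}(A^3)\ge d^3+\theta d^2-\theta nd$, the information your global test vectors cannot see. Every step is tight for $K_{a,\dots,a}$, consistent with your equality check.
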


Nikiforov asked whether \eqref{eq:least-eigenvalue-bound} is already strong enough to yield the concise form of Tur\'an's theorem.

\begin{problem}[Nikiforov \cite{N09}]\label{Prob:Main2}
 Is it possible to deduce the concise Tur\'an theorem from \eqref{eq:least-eigenvalue-bound}?
\end{problem}

We prove that the answer is yes. More precisely, \eqref{eq:least-eigenvalue-bound} implies that every $n$-vertex graph with average degree $d$ satisfies
\[
  \omega(G)\ge \frac{n}{n-d}
  =\frac{n^2}{n^2-2e(G)}.
\]
Consequently, if $e(G) > (1-1/r) n^2/2$,
then $G$ contains a clique of order $r+1$. This is the concise form of Tur\'an's theorem.

\begin{theorem}\label{Thm:Prob2}
The answer to Problem \ref{Prob:Main2} is positive.
\end{theorem}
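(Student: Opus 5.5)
Write $d=d(G)=2m/n$ and $\lambda_n=\lambda_n(G)$. The plan is to show that the right-hand side of \eqref{eq:least-eigenvalue-bound} is always at least $n/(n-d)$. Once that holds, $\omega(G)\ge n^2/(n^2-2m)$, and if $m>(1-1/r)n^2/2$ then $n^2-2m<n^2/r$. This forces $\omega(G)>r$, i.e.\ $G\supseteq K_{r+1}$, which is the concise Tur\'an theorem.

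First I dispose of degenerate cases. If $m=0$ the concise statement is vacuous. If $d=n-1$ then $G=K_n$ and everything is trivial; in any case the bound presupposes $n>d$. So assume $0<d<n$.

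The key inequality needed is
\[
1+\frac{nd}{(n-d)(d-\lambda_n)}\ \ge\ \frac{n}{n-d},
\]
using $2m=nd$. Since $n/(n-d)-1=d/(n-d)$, this is equivalent to $nd/(d-\lambda_n)\ge d$. As $d>0$ and $d-\lambda_n>0$, it reduces to
\[
d-\lambda_n\le n, \qquad\text{i.e.}\qquad -\lambda_n\le n-d .
\]

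The only real work is this eigenvalue estimate $|\lambda_n|\le n-d$, and I would do it via the trace. We have
\[
\lambda_n^2\le \sum_i\lambda_i^2-\lambda(G)^2=2m-\lambda(G)^2\le nd-d^2=d(n-d),
\]
using $\lambda(G)\ge d$ from the Rayleigh quotient, and noting that $\lambda_n$ and $\lambda(G)$ are distinct eigenvalues in the sum when $m>0$. Hence $|\lambda_n|\le\sqrt{d(n-d)}$.

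This is at most $n-d$ only when $d\le n/2$, so the trace bound closes the sparse range. The case $d>n/2$ is the main obstacle. Two routes are available there:
\begin{itemize}
\item Use the crude bound $|\lambda_n|\le \lambda(G)\le \Delta\le n-1$ together with the concise Tur\'an statement itself, but that is circular.
\item Avoid needing $|\lambda_n|\le n-d$ in general and only use it where it matters: for the Tur\'an conclusion we only need $\omega>r$ when $n-d<n/r$.
\end{itemize}

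So I would instead derive the target directly. Since $\omega$ is an integer, it suffices to show that the right-hand side of \eqref{eq:least-eigenvalue-bound} exceeds $r$ whenever $d>(1-1/r)n$. Writing the right-hand side as $1+nd/((n-d)(d-\lambda_n))$ and using $-\lambda_n\le\sqrt{d(n-d)}$, it suffices to check
\[
nd>(r-1)(n-d)\bigl(d+\sqrt{d(n-d)}\bigr).
\]
With $t=(n-d)/n<1/r$ this becomes
\[
1-t>(r-1)t\bigl(1-t+\sqrt{t(1-t)}\bigr).
\]
Since the right-hand side is increasing in $t$, it suffices to verify the non-strict version at $t=1/r$. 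There it reads
\[
1-\tfrac1r\ \ge\ \tfrac{r-1}{r}\Bigl(1-\tfrac1r+\tfrac{\sqrt{r-1}}{r}\Bigr),
\]
i.e.\ $1\ge (r-1+\sqrt{r-1})/r$, i.e.\ $\sqrt{r-1}\le1$. This fails for $r>2$.

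So the trace bound is insufficient in general, and I need something sharper. The statement I would aim for is $\lambda(G)^2+\lambda_n^2\le 2m$ combined with the fact that $\lambda(G)$ is close to $d$ only when $G$ is nearly regular. If that does not close the gap, I would try Weyl's inequality applied to $A(G)=J-I-A(\overline G)$. This gives
\[
\lambda_n(G)\ge -1-\lambda(\overline G)\ \ge\ -1-\Delta(\overline G)=-(n-\delta(G)).
\]
Equivalently it yields $-\lambda_n\le n-\delta$, which needs refinement from $\delta$ to $d$. For that refinement I would try to control $\lambda(\overline G)$ against the average degree $n-1-d$ of $\overline G$, working on a vector orthogonal to $\mathbf 1$. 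This averaging step is the crux I expect to be hardest.
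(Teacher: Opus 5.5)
\textbf{There is a genuine gap, and it cannot be patched.} The inequality you single out as the crux, $-\lambda_n(G)\le n-d(G)$, is false. You correctly note that, for a fixed $G$, it is equivalent to the right-hand side of \eqref{eq:least-eigenvalue-bound} being at least $n/(n-d)$.

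A counterexample is the wheel $K_{2,2}\vee K_1$. Here $n=5$, $m=8$, $d=16/5$. The vector equal to $\pm1$ alternately on the $4$-cycle and $0$ at the hub is an eigenvector for $-2$. So $-\lambda_n=2>9/5=n-d$, and the right-hand side of \eqref{eq:least-eigenvalue-bound} is about $2.71<25/9=n/(n-d)$.

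Your fallback also fails. That fallback is to show the right-hand side exceeds $r$ whenever $d>(1-1/r)n$. Take $G=K_{10,10}\vee K_{40}$. Then $n=60$, $2m=3360$, $d=56$. Also $\lambda_n=-10$, with eigenvector $+1$ on one side of $K_{10,10}$, $-1$ on the other, and $0$ on the clique. Thus $m=1680>(1-1/14)\cdot 3600/2$, so the concise Tur\'an theorem predicts $\omega\ge 15$. Yet \eqref{eq:least-eigenvalue-bound} applied to $G$ gives only $\omega(G)\ge 1+3360/264\approx 13.73$. Consequently no upper bound on $-\lambda_n(G)$ can make \eqref{eq:least-eigenvalue-bound}, applied to $G$ alone, imply the concise theorem. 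This rules out the trace bound, the near-regularity refinement, and Weyl applied to $J-I-A(\overline G)$ refined from $\delta$ to $d$.

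\textbf{The missing idea is amplification.} Apply \eqref{eq:least-eigenvalue-bound} to the $k$-fold join $H_k=G^{\vee k}$ instead of to $G$. Then $\omega(H_k)=k\,\omega(G)$, $|V(H_k)|=kn$ and $d(H_k)=kn-(n-d)$. So the quantity $|V|-d$ stays equal to $n-d$ while everything else grows linearly in $k$.

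Moreover,
\[
A(H_k)=I_k\otimes(A(G)-J_n)+\bm{1}_{kn}\bm{1}_{kn}^{\mathrm{T}},
\]
so Weyl's inequality gives $\lambda_{\min}(H_k)\ge\lambda_{\min}(A(G)-J_n)$. This bound does not depend on $k$. Divide the resulting inequality by $k$ and let $k\to\infty$. The least-eigenvalue term becomes negligible, and you obtain exactly $\omega(G)\ge n/(n-d)$.

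Your instinct to use Weyl's inequality is the right ingredient. It is needed to keep $\lambda_{\min}$ bounded under this blow-up, not to prove a (false) pointwise bound $-\lambda_n\le n-d$ for $G$ itself.
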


\subsection{Notation and organization}

For a graph $G$ and a subset $X\subseteq V(G)$, let $G[X]$ denote the
subgraph of $G$ induced by $X$, and let $e_G(X)$ denote the number of edges in $G[X]$.
We also write $e(G)$ for the number of edges of $G$, and
$e_G(X, Y)$ for the number of edges with one vertex in $X$ and the other in $Y$.
As usual, for a vertex $v$
of $G$ we denote by $d_G(v)$ and $N_G(v)$ the degree of $v$ and the set of neighbors
of $v$ in $G$, respectively. In the above
notation, we will skip the subscript $G$ when $G$ is clear from context.
Given a fixed graph $F$, we say that $G$ is $F$-free if it contains no copy
 of $F$. The set of $F$-free graphs on $n$ vertices with the maximum number of edges is denoted by $\EX (n,F)$.
For vertex-disjoint graphs $G$ and $H$, we use $G \vee H$ to denote their \emph{join},
which is obtained by adding all possible edges between $G$ and $H$. For notation
and graph terminology undefined here, we refer the reader to \cite{BM08}.
Finally, $\bm{1}_n$ denotes the all-ones vector of length $n$.

The rest of the paper is organized as follows. Section \ref{Sec:2} presents the proof of Theorem \ref{Thm:Prob1} and the accompanying equality characterization. The next section establishes Theorems \ref{Thm:main1} and \ref{Thm:join-preserve-spectral-inequality}. Section \ref{Sec:4} proves Theorem \ref{Thm:Prob2}. The final section records two further problems and some comments on possible extensions.

\section{A complete solution to Problem \ref{Prob:Main1}}
\label{Sec:2}

This section is devoted to giving a positive answer to Problem \ref{Prob:Main1} in a stronger form. To this end, we introduce the following notation.

\begin{definition}[The family $\mathcal{F}_r(n)$]\label{def:family-Fn}
Let $r\geq 2$ and $n$ be integers. Write $n = ra+b$, $a\geq 1$, $0\leq b<r$. Define $\mathcal{F}_r(n)$ as follows.
\begin{enumerate}
\item[(1)] If $b=0$, then $\mathcal{F}_r(n)$ is the family of all
$(r-1)a$-regular graphs on $n$ vertices.

\item[(2)] If $1\leq b<r$, then $\mathcal{F}_r(n)$ is the family of all graphs $G$ having a partition
$V(G)=X\sqcup Y$, $|X|=b(a+1)$, $|Y|=(r-b)a$,
such that $X$ is completely joined to $Y$, and $G[X]$ is $(b-1)(a+1)$-regular, $G[Y]$ is $(r-b-1)a$-regular.
\end{enumerate}
\end{definition}

\begin{lemma}[{\cite[p.\,74]{CDS80}}]\label{lem:complete-r-partite}
Let $K_{n_1,n_2,\ldots,n_r}$ be the complete $r$-partite graph whose $r$ color classes have sizes $n_1,n_2,\ldots,n_r$, respectively. Then
the spectral radius $\lambda$ of $K_{n_1,n_2,\ldots,n_r}$ satisfies
$\sum_{i=1}^r \frac{n_i}{\lambda + n_i} = 1.$
\end{lemma}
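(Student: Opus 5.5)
The claim is the classical formula from \cite{CDS80}: the spectral radius $\lambda$ of $K_{n_1,\ldots,n_r}$ satisfies $\sum_{i=1}^r n_i/(\lambda+n_i)=1$. I will prove it directly from a Perron eigenvector, using the fact that vertices in the same color class are twins.

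Write $V_1,\ldots,V_r$ for the color classes and $n=\sum_i n_i$. Let $\bm{x}$ be a nonnegative Perron eigenvector for $\lambda$. Since the graph is connected when $r\ge 2$ and every $n_i\ge 1$, Perron--Frobenius makes $\bm{x}$ positive and unique up to scaling.

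Any two vertices of $V_i$ have the same neighborhood, namely $V\setminus V_i$. The eigen-equation $\lambda x_u=\sum_{w\sim u}x_w$ then gives the same right-hand side for all $u\in V_i$. Since $\lambda>0$, all entries of $\bm{x}$ on $V_i$ are equal, say to $y_i>0$.

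Put $S=\sum_j n_j y_j$. For $u\in V_i$ the eigen-equation becomes
\[
\lambda y_i = S - n_i y_i,
\qquad\text{so}\qquad
y_i=\frac{S}{\lambda+n_i}.
\]
Here $\lambda+n_i>0$, and $S>0$ because $\bm{x}$ is positive.

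Multiply by $n_i$ and sum over $i$:
\[
S=\sum_i n_i y_i = S\sum_i \frac{n_i}{\lambda+n_i}.
\]
Dividing by $S>0$ gives $\sum_{i=1}^r n_i/(\lambda+n_i)=1$.

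The only point that needs care is that $\bm{x}$ is constant on each class, so that $S\neq 0$ and the division is legitimate. If one prefers to avoid Perron--Frobenius, the same conclusion follows from the quotient matrix $B$ of the equitable partition, with $B_{ij}=n_j$ for $i\ne j$ and $B_{ii}=0$. The spectral radius of $B$ equals $\lambda$, and its positive eigenvector produces the same equation.

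The function $t\mapsto \sum_i n_i/(t+n_i)$ is strictly decreasing on $t>0$. So $\lambda$ is the unique positive root of the equation, which is the form in which the lemma will later be used.
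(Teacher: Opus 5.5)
Your proof is correct. The paper gives no argument for this lemma; it only cites \cite{CDS80}. The standard source of the identity is the explicit characteristic polynomial
\[
P(x)=x^{\,n-r}\prod_{i=1}^{r}(x+n_i)\Bigl(1-\sum_{i=1}^{r}\frac{n_i}{x+n_i}\Bigr)
\]
of $K_{n_1,\ldots,n_r}$, which is usually obtained from the formula for the characteristic polynomial of a join.

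That route gives the whole spectrum: the eigenvalue $0$ with multiplicity $n-r$, and $r-1$ negative eigenvalues besides $\lambda$. Your twin-class argument gives only the Perron root. But it is shorter, self-contained, and delivers exactly what the paper uses in the proofs of Theorem~\ref{Thm:Prob1} and Lemma~\ref{lem:turan-quotient}. It is also the same class-constant-vector mechanism the paper uses elsewhere, e.g.\ when checking $A(G)\bm{z}=\lambda_0\bm{z}$ for $G\in\mathcal{F}_r(n)$, and in Lemma~\ref{lem:F-join-equality}.

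Two small remarks. First, your argument (connectivity, $\lambda>0$, ``unique positive root'') tacitly assumes $r\ge 2$. For $r=1$ the graph is edgeless, $\lambda=0$, and the identity is just $n_1/n_1=1$, so it still holds, but $\lambda$ is not a positive root. Second, $S>0$ follows merely from $\bm{x}$ being nonnegative and nonzero; constancy on classes is not what secures it. Likewise, the quotient-matrix variant does not really avoid Perron--Frobenius, since identifying $\lambda(B)$ with $\lambda$ still uses it (or your twin argument).
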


We now prove Theorem~\ref{Thm:Prob1}.

\begin{proof}[Proof of Theorem \ref{Thm:Prob1}]
Put $p=b(a+1)$, $q=(r-b)a$, so that $p+q=n$.
The complement $\overline{T_r(n)}$ is the disjoint union of $b$ copies of $K_{a+1}$ and $r-b$ copies of $K_a$. Hence the number of non-edges of $T_r(n)$ equals
\begin{equation}\label{eq:m0}
  m_0:=e\bigl(\overline{T_r(n)}\bigr)
  =b\binom{a+1}{2}+(r-b)\binom{a}{2}
  =\frac{pa+q(a-1)}{2}.
\end{equation}
Consequently, $e(T_r(n))=\binom{n}{2}-m_0$.
Set $\lambda_0 = \lambda(T_r(n))$. By Lemma \ref{lem:complete-r-partite}, we have
\begin{equation}\label{eq:equ-lambda0}
\frac{p}{\lambda_0+a+1} + \frac{q}{\lambda_0+a} = 1.
\end{equation}

We shall also use the following elementary convexity fact.

\begin{claim}\label{claim:convexity}
Let $c_1,c_2,\ldots,c_n$ be integers with $0\leq c_i\leq n-1$ and
$\sum_{i=1}^n c_i \leq n(a-1)+p$. Then
\[
\sum_{i=1}^n \frac{1}{\lambda_0+c_i+1}
\geq \frac{p}{\lambda_0+a+1}+\frac{q}{\lambda_0+a}=1.
\]
Equality holds if and only if the multiset $\{c_1,c_2,\ldots,c_n\}$ consists of exactly $p$ copies of $a$ and $q$ copies of $a-1$.
\end{claim}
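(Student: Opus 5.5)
The plan is a discrete convexity (majorization) argument for the function $f(c)=1/(\lambda_0+c+1)$ on the integers. Since $\lambda_0>0$, $f$ is strictly decreasing and strictly convex on $[0,\infty)$. In particular, for every integer $c$ the tangent-type inequality
\[
f(c)\ \ge\ f(a)+\bigl(f(a)-f(a-1)\bigr)(c-a)
\]
holds. This says that the graph of $f$ lies above the chord line through the two consecutive integer points $a-1$ and $a$, extended to all integers. Strict convexity makes the inequality strict for every integer $c\notin\{a-1,a\}$; this is the discrete version of the fact that a convex function lies above the secant line outside the secant interval. I would verify it quickly by writing $f(c)-f(a)$ as a sum of consecutive differences and using that these differences increase with $c$, separately for $c\ge a$ and for $c\le a-1$.

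Next I sum this inequality over $i=1,\dots,n$. Put $\delta=f(a-1)-f(a)>0$. Then
\[
\sum_i f(c_i)\ \ge\ n f(a)+\delta\Bigl(na-\sum_i c_i\Bigr)\ \ge\ n f(a)+\delta\bigl(na-n(a-1)-p\bigr)=nf(a)+\delta(n-p)=nf(a)+q\delta .
\]
The second inequality uses the hypothesis $\sum c_i\le n(a-1)+p$ together with $\delta>0$. The right-hand side equals
\[
p f(a)+q f(a-1)=\frac{p}{\lambda_0+a+1}+\frac{q}{\lambda_0+a},
\]
and this is $1$ by \eqref{eq:equ-lambda0}. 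This proves the inequality. The bound $c_i\le n-1$ is not needed for the inequality itself; it only keeps the $c_i$ in range. The constraint $c_i\ge 0$ ensures that each $f(c_i)$ is defined. The case $a-1=0$ causes no trouble, since $f$ is defined at $0$.

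For the equality case, both inequalities must be tight. Tightness in the first forces every $c_i\in\{a-1,a\}$, by strictness of the chord inequality off these two points. Tightness in the second forces $\sum c_i=n(a-1)+p$. With every $c_i\in\{a-1,a\}$, this sum condition says exactly $p$ of the $c_i$ equal $a$ and $q$ equal $a-1$. Conversely, this multiset clearly gives equality.

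The main point needing care is the strict discrete chord inequality, that is, that strict convexity yields strictness at every integer $c\neq a-1,a$ including $c$ far below $a-1$. I would handle it through the monotonicity of the successive differences $f(k+1)-f(k)=-1/\bigl((\lambda_0+k+1)(\lambda_0+k+2)\bigr)$, which are strictly increasing in $k$.
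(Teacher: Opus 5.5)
Your proof is correct, but it takes a different route from the paper.

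\textbf{The paper's route.} The paper argues extremally over the finite set of admissible tuples, in two stages.
\begin{enumerate}
\item[(i)] A minimizer of $\sum_i f(c_i)$ must satisfy $\sum_i c_i=n(a-1)+p$. If the sum were smaller, some $c_i<n-1$ could be raised by $1$; room to do this comes from $2m_0\le n(n-1)$ and $c_i\le n-1$.
\item[(ii)] The smoothing step $(s,t)\mapsto(s-1,t+1)$ for $s\ge t+2$ shows that a minimizer has all entries within $1$ of each other. This pins the multiset down to $p$ copies of $a$ and $q$ copies of $a-1$.
\end{enumerate}

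\textbf{Your route.} You give a one-shot certificate instead. You use the discrete chord inequality $f(c)\ge f(a)-\delta(c-a)$, with $\delta=f(a-1)-f(a)>0$, which is strict off $\{a-1,a\}$. Summing it over $i$ and feeding in the linear constraint gives the bound directly; it is a supporting-line (weak-duality) argument rather than a search for the minimizer.

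\textbf{What each buys.} Your argument is shorter and does not need a minimizer to exist. It also does not use the upper bound $c_i\le n-1$, so it proves the inequality for every tuple of nonnegative integers satisfying the sum constraint. The equality case falls out immediately from strictness of the chord inequality together with tightness of the sum constraint. The paper's smoothing argument is the more classical combinatorial one. It shows transparently why the ``as equal as possible'' degree sequence of $\overline{T_r(n)}$ is optimal, at the cost of a two-stage extremal argument that relies on the range restriction.
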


\begin{proof}[Proof of Claim \ref{claim:convexity}]
Define $f(x) = (\lambda_0+x+1)^{-1}$ for $x\geq 0$.
This function is strictly decreasing.
If $\sum_i c_i<n(a-1)+p$, then, since
$n(a-1)+p=pa+q(a-1)=2m_0\leq n(n-1)$
and each $c_i\leq n-1$, we can increase some $c_i$ by $1$, which strictly decreases $\sum_i f(c_i)$. Hence,
under the constraint $\sum_{i=1}^n c_i \leq n(a-1)+p$,
the sum $\sum_i f(c_i)$ is minimized only when $\sum_i c_i = n(a-1)+p$.

Now assume $\sum_i c_i=n(a-1)+p$. To minimize $\sum_i f(c_i)$,
the integers $c_i$ should be as equal as possible. Indeed, suppose two entries are $s$ and $t$ with $s\geq t+2$. Replacing them by $s-1$ and $t+1$ preserves the total sum and strictly decreases $f(s)+f(t)$, because
\begin{align*}
&f(s)+f(t)-f(s-1)-f(t+1)=\bigl(f(t) - f(t+1)\bigr) - \bigl(f(s-1) - f(s)\bigr) \\
= & ~\frac{1}{(\lambda_0+t+1)(\lambda_0+t+2)}
-\frac{1}{(\lambda_0+s)(\lambda_0+s+1)}>0.
\end{align*}
Iterating this step shows that a minimizing tuple must have all entries differing by at most $1$. Since the total sum is $\sum_{i} c_i = n(a-1)+p$,
the average value is $(n(a-1)+p)/n = a-1+p/n$.
Since $0\leq p<n$, the entries $c_i$ can only take the values
$a-1$ and $a$. Denote by $x$ the number of entries equal to
$a$. Then the remaining $n-x$ entries equal $a-1$, giving
$xa+(n-x)(a-1) = n(a-1)+x$. Comparing with $\sum_i c_i = n(a-1)+p$ yields $x=p$. Hence,
the unique such multiset consists of $p$ copies of $a$ and $n-p$ copies of $a-1$. This establishes both the inequality and the equality condition.
\end{proof}

We now continue the proof. Let $G$ be an $n$-vertex graph with $e(G)\geq e(T_r(n))$, and set
$H=\overline{G}$.
For each vertex $v$, write $c_v=d_H(v)$.
By \eqref{eq:m0} and the identity $e(T_r(n)) = \binom{n}{2} - m_0$, we obtain
$\sum_v c_v = 2e(H)\leq 2m_0 = n(a-1) + p$.
Define $x_v = f(c_v)$ and let $S=\sum_v x_v$.
By Claim \ref{claim:convexity} and \eqref{eq:equ-lambda0}, we see
$S\geq 1$. Since $A(G) = J-I-A(H)$, where $J$ is the all-ones matrix, it follows that
\[
\bm{x}^{\mathrm{T}} A(G) \bm{x} = S^2 - \sum_v x_v^2 - 2\sum_{uv\in E(H)} x_u x_v,
\]
where $\bm{x} = (x_v)_{v\in V(G)}\in\mathbb{R}^n$. On the other hand,
\[
\sum_{uv\in E(H)}(x_u-x_v)^2
=\sum_v c_v x_v^2-2\sum_{uv\in E(H)}x_u x_v.
\]
Combining these identities yields
\begin{equation}\label{eq12}
\bm{x}^{\mathrm{T}} A(G) \bm{x} = S^2 - \sum_v (c_v+1)x_v^2+
  \sum_{uv\in E(H)} (x_u-x_v)^2.
\end{equation}
Using $(\lambda_0+c_v+1) x_v^2 = x_v$, we rewrite
$(c_v+1) x_v^2 = x_v - \lambda_0 x_v^2$.
Substituting this into \eqref{eq12}, we obtain the exact identity
\[
\bm{x}^{\mathrm{T}} A(G) \bm{x} = \lambda_0\sum_v x_v^2 + S(S-1)+
\sum_{uv\in E(H)} (x_u-x_v)^2.
\]
The Rayleigh principle implies that
\begin{equation}\label{eq:quadric-form}
\lambda(G)\geq \frac{\bm{x}^{\mathrm{T}} A(G) \bm{x}}{\bm{x}^{\mathrm{T}} \bm{x}}
=\lambda_0+
\frac{S(S-1)+\sum_{uv\in E(H)} (x_u-x_v)^2}{\bm{x}^{\mathrm{T}} \bm{x}}
\geq \lambda_0.
\end{equation}
Since $\lambda_0 = \lambda (T_r(n))$, this proves $\lambda(G)\geq \lambda (T_r(n))$.

It remains to characterize the case of equality. Suppose
$e(G)\geq e(T_r(n))$ and $\lambda(G) = \lambda(T_r(n)) = \lambda_0$.
By \eqref{eq:quadric-form}, we deduce that
$S=1$, and $x_u=x_v$ for every $uv\in E(H)$.
By the equality condition in Claim \ref{claim:convexity}, $S=1$ implies that the multiset $\{d_H(v):v\in V(H)\}$
consists of exactly $p$ copies of $a$ and $q$ copies of $a-1$. Moreover,
$\sum_v d_H(v) = pa+q(a-1)$,
and therefore $e(G)=e(T_r(n))$.

If $b=0$, then $p=0$ and every vertex of $H$ has degree $a-1$. So, every vertex of $G$ has degree $n-1-(a-1)=n-a=(r-1)a$ and thus $G\in\mathcal{F}_r(n)$.
Now assume $1\leq b<r$. Let $X=\{v:d_H(v)=a\}$, $Y=\{v:d_H(v)=a-1\}$.
Then $|X|=p=b(a+1)$ and $|Y|=q=(r-b)a$.
The corresponding values of $x_v$ are
$f(a) = (\lambda_0+a+1)^{-1}$ and $f(a-1) = (\lambda_0+a)^{-1}$,
which are distinct. Hence, the condition $x_u=x_v$ for each $uv\in E(H)$ implies that $H$ has no edge joining $X$ to $Y$. Equivalently, $G$ contains all edges between $X$ and $Y$. Since each vertex in $X$ has degree $a$ in $H$ and no neighbors in $Y$, the graph $H[X]$ is $a$-regular. Similarly, $H[Y]$ is $(a-1)$-regular. Passing back to $G$, we get
$G[X]$ is $|X|-1-a=(b-1)(a+1)$-regular, and
$G[Y]$ is $|Y|-1-(a-1)=(r-b-1)a$-regular.
Thus $G\in\mathcal{F}_r(n)$.

Finally, let $G\in\mathcal{F}_r(n)$. We prove the converse assertions. First suppose $b=0$. Then $G$ is $(r-1)a$-regular on $n=ra$ vertices. Hence $e(G)=n(r-1)a/2=e(T_r(n))$,
and $\lambda(G)=(r-1)a=\lambda(T_r(n))$.
Suppose next that $1\leq b<r$. By Definition \ref{def:family-Fn}, $\overline{G}$ is the disjoint union of an $a$-regular graph on $X$, where $|X|=p$, and an $(a-1)$-regular graph on $Y$, where $|Y|=q$. Therefore, $e(\overline{G})=(pa+q(a-1))/2=m_0$,
and since $e(T_r(n)) = \binom{n}{2} - m_0$, we have $e(G)=e(T_r(n))$.
Define a vector $\bm{z}\in\mathbb{R}^n$ by
\[
z_v=
\begin{cases}
(\lambda_0+a+1)^{-1}, & v\in X,\\
(\lambda_0+a)^{-1}, & v\in Y.
\end{cases}
\]
Again \eqref{eq:equ-lambda0} gives $\sum_v z_v=1$. If $v\in X$, then $v$ has exactly $a$ neighbors in $\overline{G}$, all lying in $X$, so
$(A(G) \bm{z})_v=1-z_v-az_v=\lambda_0 z_v$.
If $v\in Y$, then $v$ has exactly $a-1$ neighbors in $\overline{G}$, all lying in $Y$, so
$(A(G) \bm{z})_v = 1 - z_v - (a-1)z_v = \lambda_0 z_v$.
Thus $A(G) \bm{z} = \lambda_0 \bm{z}$, and hence
$\lambda(G)=\lambda_0=\lambda(T_r(n))$.
The proof is complete.
\end{proof}

\section{Proofs of Theorem \ref{Thm:main1} and Theorem \ref{Thm:join-preserve-spectral-inequality}}

The goal of this section is to present proofs of Theorem \ref{Thm:main1} and Theorem \ref{Thm:join-preserve-spectral-inequality}. We first recall the number of edges of the Tur\'an graph $T_r(n)$.
Let $n = ra + b$, $0\leq b < r$. A simple calculation shows that
\begin{equation}\label{eq:size-Turan-graph}
e(T_r(n)) = \frac{1}{2} \Big( 1 - \frac{1}{r} \Big) n^2 - \frac{1}{2} b \Big( 1 - \frac{b}{r} \Big).
\end{equation}

\subsection{Proof of Theorem \ref{Thm:join-preserve-spectral-inequality}}

\begin{definition}[\cite{McLeman-McNicholas2011}]
Let $G$ be a graph. For $x>\lambda(G)$, the \emph{coronal} $\chi_G(x)$ of $G$ is defined to be the sum of the entries of the matrix $(xI - A(G))^{-1}$, i.e.,
$\chi_G(x) = \bm{1}^{\mathrm{T}} (xI-A(G))^{-1}\bm{1}$.
\end{definition}

\begin{lemma}\label{lem:join-coronal}
Let $G$ be a graph on $n$ vertices, and let $s\geq 1$. The following conclusions hold:
\begin{enumerate}
\item[$(1)$] For $x>\lambda(G)$, denote
$\beta_G(x):= \frac{s}{x} \chi_G(x).$
Then $\beta_G(x)$ is strictly decreasing on $(\lambda(G),\infty)$, and
$\lambda(\overline{K}_s\vee G)$
is the unique $x>\lambda(G)$ satisfying $\beta_G(x)=1$.

\item[$(2)$] We have
\[
\lambda(\overline{K}_s \vee G) \leq \frac{\lambda(G) + \sqrt{\lambda(G)^2 + 4ns}}{2}.
\]
Moreover, if $G$ is regular, then equality holds.
\end{enumerate}
\end{lemma}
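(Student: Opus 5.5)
Write $A=A(G)$, let $M=A(\overline{K}_s\vee G)$, and order the vertices with the $s$ independent vertices first, so that
\[
M=\begin{pmatrix} 0 & J_{s\times n}\\ J_{n\times s} & A\end{pmatrix}.
\]
For part (1), the plan is to use a Schur complement. Take $x>\lambda(G)$, so $xI-A$ is invertible. Then
\[
\det(xI-M)=\det(xI-A)\,\det\bigl(xI_s - J_{s\times n}(xI-A)^{-1}J_{n\times s}\bigr).
\]
Every entry of $J_{s\times n}(xI-A)^{-1}J_{n\times s}$ equals $\chi_G(x)$, so this matrix is $\chi_G(x)J_s$. Its eigenvalues are $s\chi_G(x)$ (once) and $0$ (with multiplicity $s-1$). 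Hence
\[
\det(xI-M)=\det(xI-A)\,x^{s-1}\bigl(x-s\chi_G(x)\bigr),
\]
and for $x>\lambda(G)>0$ the roots of $\det(xI-M)$ are exactly the roots of $\beta_G(x)=1$. (If $G$ is edgeless, $\lambda(G)=0$ and the same computation works for $x>0$.)

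Next, monotonicity. Expand in an orthonormal eigenbasis $u_i$ of $A$ with eigenvalues $\mu_i$:
\[
\chi_G(x)=\sum_i \frac{(\bm 1^{\mathrm T}u_i)^2}{x-\mu_i}.
\]
Dividing by $x$, we get
\[
\beta_G(x)=s\sum_i \frac{c_i}{x(x-\mu_i)},\qquad c_i=(\bm{1}^{\mathrm T}u_i)^2\ge 0 .
\]
Each term is positive and strictly decreasing on $(\lambda(G),\infty)$, because $x>0$ and $x-\mu_i>0$ there. At least one $c_i$ is nonzero, since $\sum_i c_i=n$. As $x\to\infty$, $\beta_G(x)\to 0$.

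As $x\downarrow\lambda(G)$, the term for $\mu_1=\lambda(G)$ should blow up, which requires $c_1>0$. This holds because $A$ has a nonnegative Perron vector $u_1$, which cannot be orthogonal to $\bm 1$. One could worry about a disconnected $G$ with a $\lambda(G)$-eigenspace of dimension greater than one, but choosing $u_1$ to be the nonnegative Perron vector settles it.

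So $\beta_G$ takes the value $1$ at a unique point $x^*>\lambda(G)$. It remains to check that $x^*=\lambda(M)$. First, $\lambda(M)>\lambda(G)$: $G$ is an induced subgraph of the connected graph $\overline{K}_s\vee G$, so interlacing or Perron--Frobenius gives strict inequality. Second, $\lambda(M)$ is a root of $\det(xI-M)$ lying in $(\lambda(G),\infty)$. By uniqueness, $\lambda(M)=x^*$.

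For part (2), if $G$ is $k$-regular then $(xI-A)^{-1}\bm 1=\bm 1/(x-k)$, so $\chi_G(x)=n/(x-k)$. The equation $\beta_G(x)=1$ then reads $x(x-k)=ns$, whose larger root is $\bigl(k+\sqrt{k^2+4ns}\bigr)/2$ with $k=\lambda(G)$. This gives equality in the regular case.

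For the general inequality, it suffices to show $\beta_G(y)\le 1$ at $y=\bigl(\lambda+\sqrt{\lambda^2+4ns}\bigr)/2$, where $\lambda=\lambda(G)$. Since $\beta_G$ is decreasing, this forces $x^*\le y$.

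The key step is the bound $\chi_G(y)\le n/(y-\lambda)$. It follows from
\[
\chi_G(y)=\sum_i\frac{c_i}{y-\mu_i}\le \frac{1}{y-\lambda}\sum_i c_i=\frac{n}{y-\lambda}.
\]
Therefore
\[
\beta_G(y)\le \frac{sn}{y(y-\lambda)}=1,
\]
because $y(y-\lambda)=ns$ by the choice of $y$.

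The only delicate point is the blow-up/uniqueness argument in part (1), namely ensuring that $c_1>0$ when $G$ is disconnected. The Perron vector choice handles it.
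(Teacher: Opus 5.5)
Your proof is correct, but it takes a different route from the paper's.

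\textbf{Part (1).} The paper normalizes a Perron eigenvector of $\overline{K}_s\vee G$ to be $1$ on the independent set. It eliminates it to get $\bm{y}=s(\lambda I-A)^{-1}\bm{1}$, hence $s\chi_G(\lambda)=\lambda$. For the converse it observes that $s(xI-A)^{-1}\bm{1}$ is a positive eigenvector, so Perron--Frobenius identifies $x$ as the spectral radius. Monotonicity comes from differentiating $\chi_G$.

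You use instead the Schur-complement identity $\det(xI-M)=x^{s-1}\det(xI-A)\,(x-s\chi_G(x))$ with $M=A(\overline{K}_s\vee G)$, together with the spectral expansion of $\chi_G$. Your identity characterizes the eigenvalues of the join in $(\lambda(G),\infty)$ without any eigenvector normalization. You also explicitly justify $\lambda(\overline{K}_s\vee G)>\lambda(G)$, which the paper uses without comment. Only Perron--Frobenius for the connected join gives this strict inequality; plain interlacing gives only $\geq$.

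The blow-up step with $c_1>0$, which you single out as the delicate point, is actually unnecessary. Once you know $\lambda(M)>\lambda(G)$ and $\det(\lambda(M)I-M)=0$, your identity gives $\beta_G(\lambda(M))=1$ directly, and strict monotonicity gives uniqueness.

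\textbf{Part (2).} The paper pairs the eigen-equation with $\bm{y}$ and applies the Rayleigh bound and Cauchy--Schwarz. It treats the regular case through an equitable partition. You derive (2) from (1), using the resolvent bound $\chi_G(y)\le n/(y-\lambda(G))$ and the formula $\chi_G(x)=n/(x-k)$ for $k$-regular $G$.

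Your route keeps everything inside the coronal framework. Your bound is the cruder form of the estimate in Lemma~\ref{lem:coronal-bound}, since $d\le\lambda(G)$. The paper's proof of (2), by contrast, is self-contained and does not rely on the monotonicity of $\beta_G$.
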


\begin{proof}
Let $A$ be the adjacency matrix of $G$, put $\mu:= \lambda(G)$, and let $\bm{x}$ be an eigenvector corresponding
to $\lambda:=\lambda (\overline{K}_s\vee G)$. We may assume $x_v = 1$ for each $v\in V(\overline{K}_s)$. Let $\bm{y}$ be the restriction of $\bm{x}$ on $V(G)$.
So,
\[
\begin{bmatrix}
A & J \\
J^{\mathrm{T}} & O
\end{bmatrix}
\begin{bmatrix}
\bm{y} \\
\bm{1}
\end{bmatrix}
= \lambda
\begin{bmatrix}
\bm{y} \\
\bm{1}
\end{bmatrix},
\]
where $J\in\mathbb{R}^{n\times s}$ is the all-ones matrix. Hence,
\begin{equation}\label{eq:eigen-eigenvector-system}
\begin{cases}
A\bm{y} + s\bm{1} = \lambda\bm{y}, \\
\bm{1}^{\mathrm{T}} \bm{y} = \lambda.
\end{cases}
\end{equation}

(1) Since $\lambda>\lambda(G)$, the matrix $\lambda I-A$ is invertible and $\bm{y} = s(\lambda I-A)^{-1}\bm{1}$. Multiplying by $\bm{1}^{\mathrm{T}}$ and using $\lambda=\bm{1}^{\mathrm{T}} \bm{y}$ gives
$s\cdot\chi_G(\lambda)/\lambda = 1$.

Conversely, if $x>\lambda(G)$ and $s\chi_G(x)/x = 1$, then
$\bm{y} = s (xI-A)^{-1}\bm{1}$ produces a positive eigenvector of $\overline{K}_s\vee G$ with eigenvalue $x$, hence $x=\lambda(\overline{K}_s\vee G)$ by the Perron--Frobenius theorem.

It remains to note monotonicity. Since $\chi_G'(x)=-\bm{1}^{\mathrm{T}} (xI-A)^{-2}\bm{1}<0$, we have
$\beta_G'(x) = s\big(\frac{\chi_G'(x)}{x} - \frac{\chi_G(x)}{x^2}\big)<0$.
Thus, the root is unique.

(2)
Taking the inner product of the first equation in \eqref{eq:eigen-eigenvector-system} with $\bm{y}$, we get
\[
\bm{y}^{\mathrm{T}} A \bm{y} + s \bm{1}^{\mathrm{T}} \bm{y} = \lambda \|\bm{y}\|^2.
\]
Using the second equation $\bm{1}^{\mathrm{T}} \bm{y} = \lambda$, this becomes
$\lambda \|\bm{y}\|^2 = \bm{y}^{\mathrm{T}} A \bm{y} + s(\bm{1}^{\mathrm{T}} \bm{y})^2/\lambda$.

The Rayleigh principle implies that
$\bm{y}^{\mathrm{T}} A \bm{y} \leq \mu \|\bm{y}\|^2$,
and by the Cauchy--Schwarz inequality,
$(\bm{1}^{\mathrm{T}} \bm{y})^2 \leq n \|\bm{y}\|^2$.
Therefore, we have
$\lambda \|\bm{y}\|^2 \leq \mu \|\bm{y}\|^2 + sn \|\bm{y}\|^2/\lambda$. It follows that
$\lambda^2 - \mu\lambda - sn \leq 0$, which implies that
\[
\lambda \leq \frac{\mu + \sqrt{\mu^2 + 4ns}}{2}
= \frac{\lambda(G) + \sqrt{\lambda(G)^2 + 4ns}}{2}.
\]
This proves the upper bound.

Now suppose that $G$ is $d$-regular. Then $d = \lambda(G)$. The partition
$V(\overline{K}_s\vee G) = V(G) \cup V(\overline{K}_s)$
is equitable, and so its quotient matrix is
\[
Q = \begin{bmatrix}
d & s \\
n & 0
\end{bmatrix}.
\]
Hence, $\lambda(\overline{K}_s\vee G)$ is equal to
the largest eigenvalue of $Q$, namely
$(d + \sqrt{d^2 + 4ns})/2$.
Thus equality holds when $G$ is regular.
\end{proof}

\begin{lemma}\label{lem:coronal-bound}
Let $G$ be a graph on $n$ vertices with average degree $d$. Then, for every $x>\lambda(G)$,
\[
\chi_G(x) \leq \frac{n(x+d)}{x^2 - \lambda(G)^2}.
\]
\end{lemma}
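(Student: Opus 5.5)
Since $A:=A(G)$ is real symmetric, I will expand $\chi_G(x)$ spectrally and then bound the result using the two first-order quantities available: $\bm{1}^{\mathrm{T}}\bm{1}=n$ and $\bm{1}^{\mathrm{T}}A\bm{1}=2e(G)=nd$. Let $A=\sum_{i=1}^n \mu_i \bm{u}_i\bm{u}_i^{\mathrm{T}}$ with an orthonormal eigenbasis, and set $w_i=(\bm{u}_i^{\mathrm{T}}\bm{1})^2\ge 0$. Then
\[
\chi_G(x)=\sum_{i=1}^n \frac{w_i}{x-\mu_i},\qquad \sum_i w_i=n,\qquad \sum_i \mu_i w_i = nd .
\]
Put $\lambda:=\lambda(G)$. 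Every eigenvalue satisfies $|\mu_i|\le \lambda$, so for $x>\lambda$ we have $x^2-\mu_i^2\ge x^2-\lambda^2>0$.

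The key step is to rationalize each term:
\[
\frac{1}{x-\mu_i}=\frac{x+\mu_i}{x^2-\mu_i^2}.
\]
The numerator $x+\mu_i$ is positive because $x>\lambda\ge|\mu_i|$. Replacing the denominator $x^2-\mu_i^2$ by the smaller positive quantity $x^2-\lambda^2$ therefore only increases the term:
\[
\frac{w_i}{x-\mu_i}\le \frac{w_i(x+\mu_i)}{x^2-\lambda^2}.
\]
Summing over $i$ gives
\[
\chi_G(x)\le \frac{x\sum_i w_i+\sum_i\mu_i w_i}{x^2-\lambda^2}=\frac{nx+nd}{x^2-\lambda^2},
\]
which is exactly the claimed bound.

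The same argument can be written without eigenvectors. Since $(xI-A)^{-1}=(xI+A)(x^2I-A^2)^{-1}$ and $x^2I-A^2\succeq (x^2-\lambda^2)I$, one gets $(xI-A)^{-1}\preceq (xI+A)/(x^2-\lambda^2)$. This uses the fact that $xI+A$ commutes with $A^2$ and is positive semidefinite. Sandwiching with $\bm{1}$ gives the same inequality.

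I expect no real obstacle. The one point needing care is the sign condition $x+\mu_i>0$, which is what allows the denominator to be replaced. It holds because the spectral radius bounds every eigenvalue in absolute value, including the most negative one.
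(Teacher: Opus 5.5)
Your proposal is correct and follows essentially the same route as the paper: expand $\chi_G(x)$ in an orthonormal eigenbasis, use $\sum_i w_i=n$ and $\sum_i \mu_i w_i=nd$, and bound each term by $\frac{1}{x-\mu_i}\le \frac{x+\mu_i}{x^2-\lambda(G)^2}$ using $|\mu_i|\le\lambda(G)<x$. You also make explicit the sign condition $x+\mu_i>0$ that the paper leaves as ``one can check'', and your operator-inequality version is just a coordinate-free restatement of the same computation.
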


\begin{proof}
Let all eigenvalues of $A(G)$ be $\lambda_1,\lambda_2,\ldots,\lambda_n$, and let $\bm{u}_1,\bm{u}_2,\ldots,\bm{u}_n$ be an orthonormal eigen-basis.
Write $\bm{1} = \sum_{i=1}^n \alpha_i\bm{u}_i$, where $\sum_{i=1}^n \alpha_i^2 = n$. Then
\begin{equation}\label{eq:sum-inner-prod}
\sum_{i=1}^n \langle \bm{1}, \bm{u}_i\rangle^2
= \sum_{i=1}^n \langle \alpha_1\bm{u}_1+\cdots+\alpha_n\bm{u}_n, \bm{u}_i \rangle^2 = \sum_{i=1}^n \alpha_i^2 = n.
\end{equation}
Moreover, we have
$A(G)\bm{1} = \sum_{i=1}^n \alpha_i A(G) \bm{u}_i = \sum_{i=1}^n \alpha_i\lambda_i\bm{u}_i.$
It follows that $\bm{1}^{\mathrm{T}} A(G) \bm{1}
= \langle \bm{1}, A(G)\bm{1}\rangle
= \sum_{i=1}^n \alpha_i^2\lambda_i$.
Since $\alpha_i = \langle \bm{1}, \bm{u}_i\rangle$, $i=1,2,\ldots,n$, we have
\begin{equation}\label{eq:sum-average-degree}
\sum_{i=1}^n \langle \bm{1}, \bm{u}_i\rangle^2 \lambda_i
= \bm{1}^{\mathrm{T}} A(G) \bm{1}
= 2e(G) = nd.
\end{equation}
In view of $(xI-A(G))^{-1} \bm{u}_i = (x-\lambda_i)^{-1} \bm{u}_i$, and $\bm{1} = \sum_{i=1}^n \alpha_i \bm{u}_i$, we have
\begin{equation}\label{eq:chi-G-angles}
\bm{1}^{\mathrm{T}} (xI-A(G))^{-1}\bm{1}
= \bigg\langle \sum_{j=1}^n \alpha_j\bm{u}_j, \sum_{i=1}^n \frac{\alpha_i}{x-\lambda_i} \bm{u}_i\bigg\rangle
= \sum_{i=1}^n \frac{\alpha_i^2}{x-\lambda_i}
= \sum_{i=1}^n \frac{\langle \bm{1}, \bm{u}_i\rangle^2}{x-\lambda_i}.
\end{equation}
Finally, since $|\lambda_i|\leq \lambda(G)$, one can check for $x > \lambda(G)$,
$\frac{1}{x-\lambda_i}
\leq\frac{x+\lambda_i}{x^2-\lambda(G)^2}$.
It follows from \eqref{eq:chi-G-angles}, \eqref{eq:sum-inner-prod} and \eqref{eq:sum-average-degree} that
\[
\chi_G(x) =\sum_{i=1}^n \frac{\langle \bm{1}, \bm{u}_i\rangle^2}{x-\lambda_i}\leq
\sum_{i=1}^n \langle \bm{1}, \bm{u}_i\rangle^2 \frac{x+\lambda_i}{x^2-\lambda(G)^2}
=\frac{nx+nd}{x^2-\lambda(G)^2},
\]
which proves the lemma.
\end{proof}

\begin{lemma}\label{lem:turan-quotient}
Let $r\geq 1$ and $s\geq 1$ be integers.
Denote $\lambda = \lambda(T_r(n))$,
$\mu = \lambda (\overline{K}_s\vee T_r(n))$, and
$d_0 = 2\big(e(T_r(n)) - 1\big)/n$. Then
\begin{equation}\label{eq:turan-quotient}
\frac{n(\mu + d_0)}{\mu^2 - \lambda^2}<\frac{\mu}{s}.
\end{equation}
\end{lemma}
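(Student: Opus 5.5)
The plan is to reduce \eqref{eq:turan-quotient} to a comparison between the coronal of $T_r(n)$ and the upper bound of Lemma~\ref{lem:coronal-bound}. By Lemma~\ref{lem:join-coronal}(1), $\mu$ satisfies $s\chi_{T}(\mu)/\mu=1$, where $T=T_r(n)$. So \eqref{eq:turan-quotient} is equivalent to
\[
\frac{n(\mu+d_0)}{\mu^2-\lambda^2}<\chi_T(\mu).
\]
Write $d_T=2e(T)/n$, so that $d_0=d_T-2/n$. The left side then equals $\frac{n(\mu+d_T)}{\mu^2-\lambda^2}-\frac{2}{\mu^2-\lambda^2}$. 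It therefore suffices to show that the loss in Lemma~\ref{lem:coronal-bound} at $x=\mu$ is strictly smaller than $2/(\mu^2-\lambda^2)$.

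Expanding as in the proof of Lemma~\ref{lem:coronal-bound}, with $\alpha_i=\langle\bm 1,\bm u_i\rangle$, the loss is
\[
\frac{n(\mu+d_T)}{\mu^2-\lambda^2}-\chi_T(\mu)=\sum_i \alpha_i^2\Big(\frac{\mu+\lambda_i}{\mu^2-\lambda^2}-\frac1{\mu-\lambda_i}\Big)=\frac{1}{\mu^2-\lambda^2}\sum_i\alpha_i^2\,\frac{\lambda^2-\lambda_i^2}{\mu-\lambda_i}.
\]
So the target is $\sum_i\alpha_i^2(\lambda^2-\lambda_i^2)/(\mu-\lambda_i)<2$. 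Since $\bm 1$ lies in the span of the part-indicator vectors, only eigenvectors of the quotient matrix contribute. If $b=0$, then $T$ is regular, $\bm 1$ is a Perron vector, and the sum is $0<2$. If $1\le b<r$, I will group the parts into $X$ (the $b$ parts of size $a+1$) and $Y$ (the $r-b$ parts of size $a$). This gives an equitable partition with quotient matrix
\[
\begin{bmatrix} p-(a+1) & q\\ p & q-a\end{bmatrix},
\]
and $\bm 1$ lies in the span of the two corresponding eigenvectors, with eigenvalues $\lambda$ and $\theta$. The trace-determinant relations give $\lambda+\theta=n-2a-1$ and $\theta\in(-(a+1),-a)$. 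The two conditions $\alpha_1^2+\alpha_2^2=n$ and $\alpha_1^2\lambda+\alpha_2^2\theta=nd_T$ give $\alpha_2^2=n(\lambda-d_T)/(\lambda-\theta)$. Using $\mu>\lambda$, the loss sum is then bounded by
\[
\alpha_2^2\,\frac{(\lambda-\theta)(\lambda+\theta)}{\mu-\theta}<\alpha_2^2(\lambda+\theta)=\frac{n(\lambda-d_T)(n-2a-1)}{\lambda-\theta}.
\]

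The main obstacle is the final numerical estimate $n(\lambda-d_T)(n-2a-1)\le 2(\lambda-\theta)$. This requires a sharp upper bound on $\lambda(T_r(n))-d_T$, which is known to be of order $b(r-b)/(rn)$. I would get it from the explicit quadratic $t^2-(n-2a-1)t+\det=0$, or alternatively from Lemma~\ref{lem:complete-r-partite}. For the latter, write $\lambda=d_T+\varepsilon$ and expand $\frac{p}{\lambda+a+1}+\frac{q}{\lambda+a}=1$ to second order in $\varepsilon$; this should give $\varepsilon\approx\frac{pq}{n^2(\lambda+a)(\lambda+a+1)}\cdot(\text{const})$. Combined with $\lambda-\theta>\lambda+a$, this should leave a comfortable margin, since the left side is then $O(b(r-b)/r)\cdot n/(n-a)$ against roughly $2n$ on the right.

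If that expansion becomes messy, the fallback is to avoid approximation. I would compute $\chi_T(\mu)$ exactly: solving $(\mu I-A)\bm y=\bm 1$ with $\bm y$ constant on parts gives $\chi_T(\mu)=g/(1-g)$, where $g=\sum_i n_i/(\mu+n_i)$. I would then verify the resulting two-variable inequality in $\mu$ directly, using $g(\lambda)=1$ and $g(\mu)<1$.
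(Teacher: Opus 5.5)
Your reduction is correct and organizes the computation more cleanly than the paper does. You write $\mu/s=\chi_T(\mu)$ and expand $\bm 1$ in the two quotient eigenvectors, with eigenvalues $\lambda$ and $\theta=L-\lambda$ where $L=n-2a-1$. This turns the lemma exactly into $n\delta L<2(\mu-\theta)$, with $\delta=\lambda-d_T$. After replacing $\mu$ by $\lambda$ you need $n\delta L\le 2(\lambda-\theta)$. This is the same endpoint the paper reaches by direct algebra with $U(x)$. Its final target $4\lambda+nL(d_0-\lambda)>0$ reads $nL\delta<2(2\lambda-L)=2(\lambda-\theta)$. One small point: when $r=2$, $b=1$ you have $L=0$, so your strict step $\alpha_2^2\frac{(\lambda-\theta)L}{\mu-\theta}<\alpha_2^2L$ fails. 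The loss is then $0<2$ directly, so nothing breaks.

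The gap is that you never prove this final inequality, and the heuristic you give for it would fail. The only slack in the lemma is the term $2/(\mu^2-\lambda^2)$ coming from $d_0=d_T-2/n$, so this step is the whole content of the lemma.

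The bound of order $b(r-b)/(rn)$ you cite is $\lambda-d_T\le(1-\frac1r)n-d_T=\frac{b(r-b)}{rn}$. It only gives $nL\delta\le\frac{b(r-b)}{r}(n-2a-1)$. On the other side, $2(\lambda-\theta)=2(L-2\theta)<2(n+1)$ because $\theta\in(-(a+1),-a)$. For $r=20$, $b=10$, $a=1$, $n=30$ this is $135$ against less than $62$. With that bound the argument breaks down for large $r$ with $b\approx r/2$, so there is no comfortable margin.

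The true $\delta$ is smaller by a factor of about $r$. It comes from evaluating $t^2-Lt-M=(t-\lambda)(t-\theta)$, with $M=a(a+1)(r-1)$, at $t=d_T=L+ra(a+1)/n$:
\[
(\lambda-d_T)(d_T-\theta)=\frac{a(a+1)b(r-b)}{n^2}=\frac{pq}{n^2}.
\]
So your expansion for $\varepsilon$ needs $d_T-\theta\approx n$ in the denominator, not $(\lambda+a)(\lambda+a+1)\approx n^2$, unless your ``const'' is of order $n$.

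To close your argument, use $d_T-\theta>d_T\ge L$. This gives
\[
nL\delta<\frac{a(a+1)b(r-b)}{n}\le\frac{ar}{2}\le\frac n2<2(n-1)<2(\lambda-\theta).
\]
This is exactly the computation the paper carries out, via its $p(d_T)$ and $\delta$. Your fallback via $\chi_T(\mu)=g/(1-g)$ would simply have reproduced the paper's $U(\mu)=\mu/(\mu+s)$ route.
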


\begin{proof}
The assertion is clear when $r=1$. Hence, in the following we assume $r\geq 2$. Write $n=ra+b$, where $0\leq b < r$ and $a\geq 1$.
We first consider the case $b=0$. Then $T_r(n)$ is regular of degree
$\lambda = (r-1)a$. Applying Lemma \ref{lem:complete-r-partite} to
$\overline{K}_s\vee T_r(n)$, we obtain
\begin{equation}\label{eq:equation-for-mu}
\frac{s}{\mu + s} + \frac{ra}{\mu + a} = 1.
\end{equation}
Since $\lambda = (r-1)a$ and $n = ra$, we have $a = n - \lambda$. Substituting this into \eqref{eq:equation-for-mu} gives
$\frac{s}{\mu + s} + \frac{n}{\mu + n - \lambda} = 1,$
which yields $\mu^2- \lambda \mu - sn = 0$, and hence
$\frac{\mu}{s} = \frac{n}{\mu - \lambda}$.
Moreover, $d_0= \lambda - 2/n$. Hence,
\[
\frac{\mu}{s}-\frac{n(\mu + d_0)}{\mu^2 - \lambda^2}
= \frac{n}{\mu - \lambda} - \frac{n(\mu + \lambda -2/n)}{\mu^2 - \lambda^2}= \frac{2}{\mu^2 - \lambda^2} > 0.
\]
Thus, \eqref{eq:turan-quotient} holds when $b=0$. Henceforth,
assume that $1\leq b<r$.

For brevity, set
$C:=ra(a+1)$, $L:=n-2a-1$, $M:=a(a+1)(r-1)$.
Define
\[
U(x):= \frac{b(a+1)}{x+a+1} + \frac{(r-b)a}{x+a}
= \frac{nx+C}{(x+a)(x+a+1)},
\]
where $x> \lambda$.
Since the graph $\overline{K}_s\vee T_r(n)$ has $b$ parts of size $a+1$, $r-b$ parts of size $a$, and one part of size $s$, Lemma \ref{lem:complete-r-partite} yields
$\frac{s}{\mu + s} + U(\mu) = 1$. Hence, we have $U(\mu) = \frac{\mu}{\mu + s}$. Set
\[
\Phi(x):=U(x)(x^2- \lambda^2)-n(x+d_0)(1-U(x)).
\]
The desired inequality \eqref{eq:turan-quotient} is equivalent to $\Phi(\mu) > 0$.

In what follows, we shall simplify $\Phi(x)$, and then prove $\Phi(\mu) > 0$. To this end,
note that $T_r(n)$ has $b$ parts of size $a+1$ and $r-b$ parts of size $a$. Applying Lemma \ref{lem:complete-r-partite} again gives $U(\lambda) = 1$. Equivalently,
$\lambda^2 - L\lambda - M = 0$. Hence, $\lambda > L$.
By simple algebra, we see
\begin{equation}\label{eq:1-Ux}
1-U(x)=\frac{x^2-Lx-M}{(x+a)(x+a+1)}
=\frac{(x- \lambda)(x+ \lambda -L)}{(x+a)(x+a+1)},
\end{equation}
where the last equality follows from $\lambda^2 - L\lambda  - M = 0 $.
Let $d_T=2e(T_r(n))/n$ be the average degree of $T_r(n)$. By \eqref{eq:size-Turan-graph}, and using $b = n - ra$, we obtain
\begin{equation}\label{eq:d_T}
d_T
= n - \frac{n}{r} - \frac{b}{n} + \frac{b^2}{rn} = n - \frac{n}{r} - \frac{n-ra}{n} + \frac{(n-ra)^2}{rn} = L + \frac{C}{n},
\end{equation}
and therefore
\begin{equation}\label{eq:d0-formula}
d_0 =d_T - \frac{2}{n} = L + \frac{C}{n} - \frac{2}{n}.
\end{equation}
Recall that $\Phi(x):=U(x)(x^2- \lambda^2)-n(x+d_0)(1-U(x))$. We have
\begin{align*}
\Phi(x)
& =
\frac{(nx + C) (x^2 - \lambda^2)}{(x+a)(x+a+1)}
- \frac{(x- \lambda)(x+ \lambda -L)}{(x+a)(x+a+1)} \cdot n(x+d_0) \\
& = \frac{x-\lambda}{(x+a)(x+a+1)} \big((nx+C)(x+\lambda) - (nx + nd_0)(x + \lambda - L)\big).
\end{align*}
From \eqref{eq:d0-formula} we obtain $C = nd_0-nL+2$. Substituting this into the equation above gives
$\Phi(x) = \frac{x-\lambda}{(x+a)(x+a+1)}
\big (2(x+\lambda)+n(d_0-\lambda)L\big).$
Therefore, to complete the proof, it remains to prove
$2(\mu + \lambda) + n(d_0 - \lambda)L > 0$.

Since $\mu > \lambda$, it suffices to show that
$4\lambda + nL (d_0 - \lambda) > 0$.
To this end, set $\delta:=\lambda-d_T\geq 0$.
Let $p(x):= x^2 - Lx - M$. Then $p(\lambda)=0$. By \eqref{eq:d_T}, a short calculation gives
\begin{align*}
p(d_T)
& = d_T^2 - L\cdot d_T - M
= \Big(L + \frac{C}{n}\Big)^2 - L\cdot \Big(L + \frac{C}{n}\Big) - M = \frac{LC}{n} + \frac{C^2}{n^2} - M.
\end{align*}
Substituting $C = ra(a+1)$, $L = n-2a-1$, and $M = a(a+1)(r-1)$, we get
\begin{align}
p(d_T) & = \frac{ra(a+1) L}{n} + \frac{r^2a^2(a+1)^2}{n^2} - a(a+1) (r-1) \nonumber \\
& = -\frac{a(a+1)}{n^2} \big( (r-1)n^2 - rn L - r^2 a(a+1) \big) \nonumber \\
& = -\frac{a(a+1)}{n^2} \big((r-1)n^2 - r(n-2a-1)n - r^2a(a+1) \big) \nonumber \\
& = -\frac{a(a+1)}{n^2} \big(-n^2 + r(2a+1) n - r^2a (a+1)\big) \nonumber \\
& = -\frac{a(a+1)b(r-b)}{n^2}, \label{eq:p(d_T)}
\end{align}
where the last equality follows from $n = ra+b$.
On the other hand, since $\lambda^2 - L\lambda = M$, we have
\[
p(d_T) = d_T^2 - Ld_T - M
= (d_T^2 - \lambda^2) - (
Ld_T - L\lambda)
= (d_T - \lambda) (\lambda +d_T - L).
\]
Combining with \eqref{eq:p(d_T)} we have
\begin{equation}\label{eq:delta-formula}
\delta = \lambda -d_T
=\frac{-p(d_T)}{\lambda +d_T-L}
=\frac{a(a+1)b(r-b)}{n^2(\lambda + C/n)}.
\end{equation}
Since $d_0=d_T-2/n$, and $d_T = \lambda - \delta$, we have
\begin{equation}\label{eq:min-bracket}
4\lambda + n(d_0 - \lambda)L = 4\lambda + nL \Big(d_T - \frac{2}{n} - \lambda\Big) = 2(2\lambda - L) - nL\delta.
\end{equation}
If $L=0$, then $4\lambda + nL(d_0 - \lambda) = 4\lambda > 0$. If $L>0$, then
$\lambda > L$ by the fact $\lambda^2 - L\lambda - M = 0$.
Moreover, since $L = n-2a-1>0$, we have $r\geq 3$.
It follows from \eqref{eq:delta-formula} that
\begin{equation}\label{eq:nLdelta-bound}
nL\delta = \frac{L\,a(a+1)b(r-b)}{n\lambda + C}
\le \frac{a(a+1)b(r-b)}{n}.
\end{equation}
Since $r\geq 3$, we also have
$b(r-b)\leq r^2/4$, $a+1\leq 2a$,
$n\geq ra$, and $L=(r-2)a+b-1\geq (r-2)a$.
Hence
\begin{equation}\label{eq:elementary-bound}
\frac{a(a+1)b(r-b)}{n} \leq \frac{ar}{2}
\leq 2(r-2)a \leq 2L.
\end{equation}
Combining~\eqref{eq:min-bracket}--\eqref{eq:elementary-bound} yields
\[
4\lambda + n(d_0 - \lambda)L \geq 2(2\lambda - L)-2L
= 4(\lambda - L) > 0,
\]
because $\lambda > L$. Hence $\Phi(\mu) > 0$, and therefore~\eqref{eq:turan-quotient}.
\end{proof}

\begin{lemma}\label{lem:F-join-equality}
Let $r\geq 2$ and $s\geq 1$. If
$F\in\mathcal F_r(n)$, then
\[
 \lambda(\overline K_s\vee F)=\lambda(\overline K_s\vee T_r(n)).
\]
\end{lemma}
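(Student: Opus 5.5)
The plan is to exhibit an explicit positive eigenvector of $\overline K_s\vee F$ that is constant on the natural blocks. Its eigenvalue then equals the Perron root of $\overline K_s\vee T_r(n)$, which Lemma~\ref{lem:complete-r-partite} identifies. Write $n=ra+b$ with $0\le b<r$, and put $p=b(a+1)$, $q=(r-b)a$. Let $\mu=\lambda(\overline K_s\vee T_r(n))$. Since $\overline K_s\vee T_r(n)$ is complete $(r+1)$-partite with $b$ parts of size $a+1$, $r-b$ parts of size $a$, and one part of size $s$, Lemma~\ref{lem:complete-r-partite} gives
\[
\frac{s}{\mu+s}+\frac{p}{\mu+a+1}+\frac{q}{\mu+a}=1 .
\]
This is exactly the relation $U(\mu)=\mu/(\mu+s)$ used in the proof of Lemma~\ref{lem:turan-quotient}.

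Next I define a vector $\bm z$ on $V(\overline K_s\vee F)$. On the $s$ vertices of $\overline K_s$, set $z_w=\mu/(\mu+s)$. If $b\ge1$, take the partition $V(F)=X\sqcup Y$ from Definition~\ref{def:family-Fn}, and set $z_v=(\mu+a+1)^{-1}$ on $X$ and $z_v=(\mu+a)^{-1}$ on $Y$. If $b=0$, set $z_v=(\mu+a)^{-1}$ on all of $V(F)$. The displayed relation is then equivalent to $\sum_{v\in V(F)}z_v=\mu/(\mu+s)$.

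Now I check $A\bm z=\mu\bm z$ entrywise. A vertex $w$ of $\overline K_s$ is adjacent to all of $V(F)$, so $(A\bm z)_w=\sum_{v\in V(F)}z_v=\mu/(\mu+s)$. This should equal $\mu z_w$, and it does, because $z_w=1/(\mu+s)\cdot\mu\cdot\frac1\mu$. To be careful: I need $\mu z_w=\sum_{V(F)} z_v$, so $z_w$ should be taken as $1/(\mu+s)$ rather than $\mu/(\mu+s)$; I will fix $z_w=1/(\mu+s)$.

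For $v\in X$, the non-neighbours of $v$ in $F$ are exactly $a$ vertices of $X$ (those adjacent to $v$ in $\overline F$), and $v$ is adjacent to all of $\overline K_s$. Hence
\[
(A\bm z)_v=s\,z_w+\sum_{u\in V(F)}z_u - z_v - a z_v = \frac{s}{\mu+s}+\frac{\mu}{\mu+s}-(a+1)z_v = 1-(a+1)z_v .
\]
With $z_v=(\mu+a+1)^{-1}$ this equals $\mu z_v$. The same computation for $v\in Y$, where $v$ has $a-1$ non-neighbours, all inside $Y$, gives $1-a z_v=\mu z_v$. The regular case $b=0$ is identical. Thus $\bm z$ is a positive eigenvector with eigenvalue $\mu$. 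By the Perron--Frobenius theorem (the graph is connected since $s\ge1$), $\lambda(\overline K_s\vee F)=\mu$.

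The only delicate point is getting the normalization on $\overline K_s$ consistent. One must verify that $\sum_{V(F)}z_v=s/(\mu+s)\cdot(\mu/s)=\mu/(\mu+s)$ follows from the partite equation. Everything else mirrors the final paragraph of the proof of Theorem~\ref{Thm:Prob1}. Alternatively, one could invoke Lemma~\ref{lem:join-coronal}(1), since $\chi_F$ depends only on this equitable structure, but the direct eigenvector check is simpler.
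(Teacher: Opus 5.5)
Your proof is correct and is essentially the paper's argument. The paper notes that $X\sqcup Y\sqcup Z$ (or $V(F)\sqcup Z$ when $b=0$) is an equitable partition of $\overline K_s\vee F$ with the same quotient matrix as the matching partition of $\overline K_s\vee T_r(n)$, and your block-constant vector is exactly the lift of that quotient matrix's Perron vector, checked directly via Lemma~\ref{lem:complete-r-partite} and Perron--Frobenius. In a write-up, state the normalization $z_w=(\mu+s)^{-1}$ on $\overline K_s$ once, instead of first setting $\mu/(\mu+s)$ and correcting it mid-proof.
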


\begin{proof}
Write $n=ra+b$, where $a\geq 1$ and $0\leq b<r$, and let $Z$ denote
$V(\overline K_s)$.
If $b=0$, then both $F$ and $T_r(n)$ are $(r-1)a$-regular.  Hence the
partitions $V(F)\sqcup Z$ and $V(T_r(n))\sqcup Z$ are equitable and share the
same quotient matrix
\[
 \begin{bmatrix}
 (r-1)a & s \\
 n & 0
 \end{bmatrix}.
\]
Hence, $\overline{K}_s\vee F$ and $\overline{K}_s\vee T_r(n)$ have the same spectral
radius.

Now assume $1\leq b<r$. Let $V(F)=X\sqcup Y$ be the partition from
Definition~\ref{def:family-Fn}; thus
$|X|=b(a+1)$, $|Y|=(r-b)a$, $X$ is completely joined to $Y$, $F[X]$ is
$(b-1)(a+1)$-regular, and $F[Y]$ is $(r-b-1)a$-regular.  In
$\overline K_s\vee F$, the partition $X\sqcup Y\sqcup Z$ is equitable
with quotient matrix
\[
Q=
\begin{bmatrix}
 (b-1)(a+1) & (r-b)a & s \\
 b(a+1) & (r-b-1)a & s \\
 b(a+1) & (r-b)a & 0
\end{bmatrix}.
\]
The same quotient matrix is obtained for
$\overline K_s\vee T_r(n)$ by taking $X$ to be the union of the $b$ parts
of size $a+1$ and $Y$ to be the union of the remaining $r-b$ parts of
size $a$. It follows that $ \lambda(\overline K_s\vee F)=\lambda(\overline K_s\vee T_r(n))$.
\end{proof}

\begin{proof}[Proof of Theorem \ref{Thm:join-preserve-spectral-inequality}]
The claim holds trivially for $r=1$; we therefore restrict our attention to the case $r\geq 2$. For brevity, set $\lambda:= \lambda(T_r(n))$, $\mu:=\lambda(\overline{K}_s\vee T_r(n))$, $\eta:=\lambda(G)$, and $\rho:= \lambda (\overline{K}_s\vee G)$.

(1) If $\eta\geq\mu$, then immediately $\eta\geq \mu > \lambda$,
so there is nothing more to prove. Hence assume from now on that
$\eta < \mu$.

Since $\rho\geq \mu$ and $\mu > \eta$, Lemma \ref{lem:join-coronal} implies $\chi_G(\mu)\geq \mu/s$.
By Lemma \ref{lem:coronal-bound}, we have
$\chi_G(\mu)\leq n(\mu + d)/(\mu^2-\eta^2)$, where $d$ is the average degree of $G$.
Consequently,
\begin{equation}\label{eq:key-direct}
\frac{\mu}{s} \leq \frac{n(\mu +d)}{\mu^2-\eta^2}.
\end{equation}
Set $d_0:=2(e(T_r(n))-1)/n$. There are two cases.

\smallskip
\noindent\textbf{Case 1}: $d\leq d_0$.
It follows from \eqref{eq:key-direct} that
$\mu^2-\eta^2 \leq sn(\mu + d)/\mu
\leq sn(\mu + d_0)/\mu$.
Hence, $\eta^2\geq \mu^2 - sn(\mu + d_0)/\mu$.
By Lemma \ref{lem:turan-quotient},
$\mu^2 - sn(\mu + d_0)/\mu > \lambda^2$.
Thus $\eta>\lambda$.

\smallskip
\noindent\textbf{Case 2}: $d > d_0$.
Then $e(G) > e(T_r(n)) - 1$, and therefore $e(G)\geq e(T_r(n))$. Then
$\eta = \lambda(G)\geq \lambda(T_r(n)) = \lambda$.

Hence, we have $\lambda(G)\geq\lambda(T_r(n))$. Moreover, if $\rho\geq \mu$ and $\eta=\lambda$,
then $d>d_0$, which implies $e(G)=e(T_r(n))$.
By Theorem \ref{Thm:Prob1}, we see $G\in\mathcal{F}_r(n)$.
By Lemma~\ref{lem:F-join-equality}, this further implies
$\rho=\mu$.

(2) If $\rho > \mu$, then part (1) implies that $\eta\geq\lambda$. If $\eta = \lambda$, then the ``moreover" part of (1) gives $\rho=\mu$, a contradiction.
Hence, $\eta>\lambda$.

(3) is exactly (1) in the case $\rho = \mu$.
\end{proof}

\subsection{Proof of Theorem \ref{Thm:main1}}

\begin{lemma}\label{lem:F-local}
Let $F\in\mathcal F_r(n)$. If $F\not\cong T_r(n)$, then there is a
vertex $u\in V(F)$ such that
\[
e(F[N_F(u)])>e(T_{r-1}(d_F(u))).
\]
\end{lemma}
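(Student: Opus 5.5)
The plan is to apply Theorem~\ref{thm:local-turan} (Bollob\'as--Thomason, Erd\H{o}s--S\'os) directly to $F$. By Theorem~\ref{Thm:Prob1}, every $F\in\mathcal F_r(n)$ satisfies $e(F)=e(T_r(n))$. So $F$ is an $n$-vertex graph with at least $e(T_r(n))$ edges. Theorem~\ref{thm:local-turan} then says that either $F\cong T_r(n)$, or some vertex $u$ has $e(F[N_F(u)])>e(T_{r-1}(d_F(u)))$. Since we assume $F\not\cong T_r(n)$, the second alternative must hold, which is exactly the claim.

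The only point to check is that the hypotheses line up: the statement of Theorem~\ref{thm:local-turan} requires $r\ge 2$, and $\mathcal F_r(n)$ is defined for $r\ge 2$. The degenerate case $r=2$ is also harmless: there $T_{r-1}(d)=T_1(d)$ is edgeless, so the conclusion just says that $N_F(u)$ contains an edge, i.e.\ $F$ has a triangle. This is consistent with the classical theorem.

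If one preferred a self-contained argument avoiding the cited theorem, I would instead argue structurally. First note that $\mathcal F_r(n)$ graphs with $F\not\cong T_r(n)$ contain a $K_{r+1}$. Indeed, $F$ has exactly Tur\'an's edge count, and Tur\'an's uniqueness says that a $K_{r+1}$-free graph with that many edges is $T_r(n)$. Then I would try to locate $u$ inside a copy of $K_{r+1}$, using the regularity of $F[X]$ and $F[Y]$ to compute $d_F(u)$ exactly and to compare $e(F[N_F(u)])$ with $e(T_{r-1}(d_F(u)))$ by counting. That counting comparison would be the main obstacle, which is why citing Theorem~\ref{thm:local-turan} is the route I would take.
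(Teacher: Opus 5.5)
Your proof is correct. Every $F\in\mathcal F_r(n)$ has exactly $e(T_r(n))$ edges; this is the converse part of Theorem~\ref{Thm:Prob1}, which is a direct complement count. So Theorem~\ref{thm:local-turan} applies verbatim, and $F\not\cong T_r(n)$ forces the dense-neighbourhood alternative.

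The paper takes a different, self-contained route and does not invoke the Bollob\'as--Thomason/Erd\H{o}s--S\'os theorem. It works with the regular structure of $H=\overline F$. For each vertex $u$, let $A_u$ be the closed $H$-neighbourhood of $u$, and let $B_u$ be the rest of the vertex set (when $b\ge 1$, the rest of whichever of $X$, $Y$ contains $u$). A degree count in $H$ then gives the exact identity
\[
e(F[N_F(u)])=e(T_{r-1}(d_F(u)))+\tfrac12\,e_H(A_u,B_u).
\]
Hence the excess is nonnegative at every vertex. It vanishes at every vertex only if each $A_u$ is a connected component of $H$. Regularity then makes each component a clique $K_a$ or $K_{a+1}$ with the right multiplicities, i.e.\ $F\cong T_r(n)$.

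Your reduction is one line but rests on a nontrivial external theorem. The paper's argument is elementary, keeps the proof of Theorem~\ref{Thm:main1} self-contained, and identifies exactly which vertices work: those whose closed $\overline F$-neighbourhood is not a component of $\overline F$.

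Your fallback idea of taking $u$ inside a copy of $K_{r+1}$ would not work as stated. Take $F=\overline{K}_3\vee\overline{C_6}\in\mathcal F_3(9)$, which is $6$-regular and not $T_3(9)$. A vertex of $\overline{K}_3$ lies in a $K_4$, yet its neighbourhood is the triangular prism $\overline{C_6}$, with exactly $9=e(T_2(6))$ edges. The identity above is what tells you which vertex to choose.
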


\begin{proof}
Write $n=ra+b$, where $a\geq 1$ and $0\leq b<r$. Put
$H = \overline F$.

First suppose that $b=0$. Then $n=ra$, and by the definition of
$\mathcal F_r(n)$, the graph $F$ is $(r-1)a$-regular. Hence $H$ is
$(a-1)$-regular.  Fix a vertex $u$ and define
\[
A_u:= \{u\}\cup N_H(u), \qquad
B_u:= V(F)\setminus A_u=N_F(u).
\]
Then $|A_u|=a$ and $|B_u|=(r-1)a=d_F(u)$. Since $H$ is
$(a-1)$-regular,
\[
\sum_{z\in B_u} d_H(z)=(r-1)a(a-1) =2e_H(B_u)+e_H(A_u,B_u).
\]
Consequently, we obtain that $e_H(B_u) = ((r-1)a(a-1) - e_H(A_u,B_u))/2$.
Thus,
\begin{align*}
e_F(B_u)
& = \binom{(r-1)a}{2} - e_H(B_u) \\
& = \binom{(r-1)a}{2} - \frac{(r-1)a(a-1)}2
+ \frac{e_H(A_u,B_u)}2 \\
& = \binom{r-1}{2}a^2 + \frac{e_H(A_u,B_u)}2 \\
& = e(T_{r-1}((r-1)a)) + \frac{e_H(A_u,B_u)}2.
\end{align*}
Hence $e(F[N_F(u)]) > e(T_{r-1}(d_F(u)))$
whenever $e_H(A_u,B_u) > 0$.

Now suppose $e_H(A_u,B_u) = 0$ for every $u$. Then each $A_u$ forms a component of
$H$. Since $|A_u|=a$ and every vertex of $A_u$ has degree $a-1$ in $H$,
it follows that $H[A_u]$ must be a copy of $K_a$. Hence $H$ is the disjoint union
of $r$ copies of $K_a$, and therefore $F\cong K_{a,a,\ldots,a} \cong T_r(n)$.
This contradicts the assumption $F\not\cong T_r(n)$. The desired inequality follows
in the case $b=0$.

Next, suppose that $1\leq b<r$. Let
\[
V(F) = X\sqcup Y, \qquad |X|=b(a+1), \qquad |Y|=(r-b)a
\]
be the partition from the definition of $\mathcal F_r(n)$. The set
$X$ is completely joined to $Y$ in $F$, so the complement $H = \overline{F}$
has no edges between $X$ and $Y$. Moreover,
$H[X]$ is $a$-regular and $H[Y]$ is $(a-1)$-regular.

Take first a vertex $u\in X$. Put $A_u = \{u\}\cup N_{H[X]}(u)$ and $B_u = X\setminus A_u$.
Then $|A_u| = a+1$, $|B_u|=(b-1)(a+1)$, and
\[
N_F(u) = B_u\sqcup Y, \qquad
d_F(u) = (b-1)(a+1)+(r-b)a=(r-1)a+b-1.
\]
Since $H[X]$ is $a$-regular, we have $|B_u|a = 2e_{H[X]}(B_u) + e_{H[X]}(A_u,B_u)$. Thus
\[
e_{H[X]}(B_u) = (b-1)\binom{a+1}{2} - \frac{e_{H[X]}(A_u,B_u)}2.
\]
Also, $H[Y]$ is $(a-1)$-regular, giving $e_{H[Y]}(Y) = (r-b)\binom{a}{2}$.
The complement of $F[N_F(u)]$ is exactly
$H[B_u]\cup H[Y]$. Hence,
\begin{align*}
e(F[N_F(u)])
& = \binom{d_F(u)}2 - e_{H[X]}(B_u) - e_{H[Y]}(Y) \\
& = \binom{d_F(u)}2 - (b-1)\binom{a+1}{2} - (r-b)\binom a2
+ \frac{e_{H[X]}(A_u,B_u)}2.
\end{align*}
But $d_F(u)=(r-1)a+b-1$, so $T_{r-1}(d_F(u))$ has $b-1$ parts of size
$a+1$ and $r-b$ parts of size $a$. Therefore,
\[
e(T_{r-1}(d_F(u))) = \binom{d_F(u)}2 - (b-1)\binom{a+1}{2} - (r-b)\binom a2.
\]
Combining the last two displayed equations gives
\[
e(F[N_F(u)]) = e(T_{r-1}(d_F(u))) + \frac{e_{H[X]}(A_u,B_u)}2.
\]
Hence any $u\in X$ with $e_{H[X]}(A_u,B_u)>0$ gives the desired
inequality.

Similarly, take a vertex $u\in Y$ and put $C_u = \{u\}\cup N_{H[Y]}(u)$,
$D_u = Y\setminus C_u$.
Then $|C_u|=a$, $|D_u|=(r-b-1)a$, and
\[
N_F(u)=X\sqcup D_u,\qquad d_F(u)=b(a+1)+(r-b-1)a=(r-1)a+b.
\]
Since $H[Y]$ is $(a-1)$-regular, we find
\[
e_{H[Y]}(D_u) = (r-b-1)\binom a2-\frac{e_{H[Y]}(C_u,D_u)}2.
\]
Also, $e_{H[X]}(X) = b\binom{a+1}{2}$.
The graph $T_{r-1}(d_F(u))$ has $b$ parts of size $a+1$ and
$r-b-1$ parts of size $a$; when $b=r-1$, this simply means that all
$r-1$ parts have size $a+1$.  Hence
\[
e(T_{r-1}(d_F(u))) = \binom{d_F(u)}2 - b\binom{a+1}{2} - (r-b-1)\binom a2,
\]
and a similar complement count gives
\[
e(F[N_F(u)]) = e(T_{r-1}(d_F(u))) + \frac{e_{H[Y]}(C_u,D_u)}2.
\]
Thus any vertex $u\in Y$ with $e_{H[Y]}(C_u,D_u)>0$ also gives the desired
inequality.

It remains to show that such a vertex exists under the assumptions that
$e_{H[X]}(A_u,B_u)=0$ for every $u\in X$ and $e_{H[Y]}(C_u,D_u)=0$ for every $u\in Y$.
Since $e_{H[X]}(A_u,B_u)=0$ for every $u\in X$,
it follows that $H[X]$ is the disjoint union of $b$ copies of $K_{a+1}$. Likewise, the condition $e_{H[Y]}(C_u,D_u)=0$ for every $u\in Y$ implies that
$H[Y]$ is the disjoint union of $r-b$ copies of $K_a$. Hence
$H\cong bK_{a+1}\sqcup (r-b)K_a$, and therefore $F\cong T_r(n)$, contradicting $F\not\cong T_r(n)$.
\end{proof}

\begin{proof}[Proof of Theorem \ref{Thm:main1}]
We shall use the following spectral symmetrization from \cite{G96}. Let $\bm{x}$ be a nonnegative eigenvector corresponding to the spectral radius of $G$, and let $v$ be a vertex with
$x_v=\max\{x_u:u\in V(G)\}$. Define
\[
N=N_G(v),\qquad S=V(G)\setminus N,
   \qquad s=|S|,
   \qquad H=G[N].
\]
Notice that $v\in S$ and
$x_v > 0$. We claim that $\lambda(G)\leq \lambda(\overline{K}_s\vee H)$.
Indeed, the eigenvalue equation at $v$ gives
$\sum_{z\in N} x_z = \lambda(G) x_v$.
For each $u\in S$, using $x_u\leq x_v$ and the eigenvalue equation at
$u$, we get
\begin{equation}\label{eq:eigen-equation-inequality}
\sum_{z\in N}x_z=\lambda(G)x_v\geq \lambda(G)x_u
=\sum_{z\in N_G(u)\cap N}x_z+
  \sum_{z\in N_G(u)\cap S}x_z .
\end{equation}
Now consider the difference $\bm{x}^{\mathrm T} A(\overline{K}_s\vee H)\bm{x} - \bm{x}^{\mathrm T} A(G)\bm{x}$. Since
\begin{equation}\label{eq:difference-quadric-form}
\frac{1}{2} \bm{x}^{\mathrm T} \big(A(\overline{K}_s\vee H) -A(G)\big)\bm{x} =
\sum_{\{uv\}\in E(\overline{K}_s\vee H)} x_ux_v
- \sum_{uv\in E(G)} x_ux_v,
\end{equation}
we only need to compare the edge sets of $\overline{K}_s\vee H$ and $G$.
On $N$, the two graphs coincide, so edges inside $N$ make no
contribution to the difference. Between $S$ and $N$, the graph $\overline{K}_s\vee H$ contains all possible
edges, while $G$ contains precisely those edges from $u\in S$ to
$N_G(u)\cap N$. Hence the total contribution from the pairs with one
endpoint in $S$ and the other in $N$ is
\[
\sum_{u\in S} x_u\Bigg(\sum_{z\in N} x_z -
\sum_{z\in N_G(u)\cap N} x_z\Bigg).
\]
Moreover, every edge of $G[S]$ is deleted in passing
from $G$ to $\overline{K}_s\vee H$; these deleted edges contribute
$- \sum_{uz\in E(G[S])} x_ux_z$.
Combining these contributions gives
\begin{align}
\frac{1}{2}\big(\bm{x}^{\mathrm T}A(\overline{K}_s\vee H)\bm{x} - \bm{x}^{\mathrm T} A(G)\bm{x}\big)
& = \sum_{u\in S} x_u\Bigg(\sum_{z\in N} x_z -
\sum_{z\in N_G(u)\cap N} x_z\Bigg)
- \sum_{uz\in E(G[S])} x_ux_z \nonumber \\
& \geq \sum_{u\in S} x_u\sum_{z\in N_G(u)\cap S} x_z
- \sum_{uz\in E(G[S])} x_ux_z \nonumber \\
& = \sum_{uz\in E(G[S])} x_ux_z \geq 0, \label{eq:difference-Rayleigh-quotient}
\end{align}
where the inequality in the second line follows from \eqref{eq:eigen-equation-inequality}.
By the Rayleigh principle, we conclude $\lambda(\overline{K}_s\vee H)\geq\lambda(G)$.

We first consider the `furthermore' part of Theorem \ref{Thm:main1}. Assume $\lambda(G)>\lambda(T_r(n))$.
Since any complete $r$-partite graph on $n$ vertices has spectral radius at most $\lambda (T_r(n))$,
we have
\begin{equation}\label{eq:inequality-chain}
\lambda (\overline{K}_s\vee T_{r-1}(d(v))) \leq \lambda (T_r(n)) < \lambda (G) \leq \lambda(\overline{K}_s\vee H).
\end{equation}
Thus $\lambda (\overline{K}_s\vee H) > \lambda (\overline{K}_s\vee T_{r-1}(d(v))$. By Theorem \ref{Thm:join-preserve-spectral-inequality} (2), it follows that $\lambda(G[N(v)]) > \lambda(T_{r-1}(d(v)))$, finishing the proof of second part.

For the first part of Theorem \ref{Thm:main1}, it remains to consider the case $\lambda(G) = \lambda(T_r(n))$ and $G\not\cong T_r(n)$. Equivalently, we must show that under these conditions, there is a vertex $v$ such that $\lambda(G[N_G(v)]) > \lambda(T_{r-1}(d_G(v)))$.

If $G$ is disconnected, let $C$ be a component
with $\lambda(C)=\lambda(G)$, and set $m=|V(C)|<n$.
Then $\lambda(C)=\lambda(T_r(n))>\lambda(T_r(m))$.
Applying the already proved `furthermore' part to $C$ gives a vertex
$u\in V(C)$ such that $\lambda(C[N_C(u)])>\lambda(T_{r-1}(d_C(u)))$.
Since $C$ is a component of $G$, this is exactly the required conclusion
for $G$. Hence we may assume that $G$ is connected; in particular, the
Perron vector $\bm{x}$ is positive.

Return to the maximum-weight vertex $v$ fixed above. If $\lambda(\overline{K}_s\vee H) > \lambda(\overline{K}_s\vee T_{r-1}(d_G(v)))$, then
Theorem \ref{Thm:join-preserve-spectral-inequality} (2) gives
$\lambda(H)>\lambda(T_{r-1}(d_G(v)))$, and we are done. Hence we may assume $\lambda(\overline{K}_s\vee H)\leq\lambda(\overline{K}_s\vee T_{r-1}(d_G(v)))$.
Analogously to \eqref{eq:inequality-chain}, we obtain
\begin{equation}\label{eq:inequality-chain-2}
\lambda(\overline{K}_s\vee T_{r-1}(d(v))) \leq \lambda(T_r(n)) = \lambda(G) \leq\lambda(\overline{K}_s\vee H) \leq\lambda(\overline{K}_s\vee T_{r-1}(d(v))),
\end{equation}
this forces $\lambda(\overline{K}_s\vee H) = \lambda(\overline{K}_s\vee T_{r-1}(d(v)))$.
By Theorem \ref{Thm:join-preserve-spectral-inequality}, we see $\lambda(H)\geq\lambda(T_{r-1}(d(v)))$. If $\lambda(H)>\lambda(T_{r-1}(d(v)))$, then
the desired conclusion holds. Thus assume $\lambda(H)=\lambda(T_{r-1}(d(v)))$. Then Theorem \ref{Thm:join-preserve-spectral-inequality} (3) implies $H\in\mathcal F_{r-1}(d_G(v))$.
Moreover, equality in \eqref{eq:inequality-chain-2} together with the equality
case of the spectral Tur\'an theorem yields
\begin{equation}\label{eq:unique-spectral-turan}
   \overline{K}_s\vee T_{r-1}(d_G(v))\cong T_r(n).
\end{equation}
From $H\in\mathcal F_{r-1}(d_G(v))$ and \eqref{eq:unique-spectral-turan}, it follows
that
\begin{equation}\label{eq:join-in-F-main1}
\overline K_s\vee H\in\mathcal F_r(n).
\end{equation}
Indeed, write $n=ra+b$, where $a\geq 1$ and $0\leq b<r$. By
\eqref{eq:unique-spectral-turan}, the set $S$ has size either $a$ or $a+1$.
If $b=0$, then necessarily $s=a$ and $d_G(v)=(r-1)a$. Since
$H\in\mathcal F_{r-1}((r-1)a)$, the graph $H$ is $(r-2)a$-regular; hence
every vertex of $\overline K_s\vee H$ has degree $(r-1)a$, and so
$\overline K_s\vee H\in\mathcal F_r(n)$.

Assume next that $1\leq b<r$. If $s=a+1$, then
$d_G(v)=(r-1)a+b-1$. If $b=1$, then by the definition of $\mathcal{F}_r(n)$ we have $\overline{K}_s\vee H\in\mathcal{F}_r(n)$; if $b>1$, since $H\in\mathcal F_{r-1}(d_G(v))$, there exists a partition $V(H) = X\sqcup Y$ with $|X|=(b-1)(a+1)$, $|Y|=(r-b)a$, where
$H[X]$ is $(b-2)(a+1)$-regular and $H[Y]$ is $(r-b-1)a$-regular.
One can verify that $\overline{K}_s\vee H$ belongs to $\mathcal{F}_r(n)$ with partition $(S\sqcup X)\sqcup Y$.
If $s=a$, then $d_G(v)=(r-1)a+b$. A similar argument shows $\overline{K}_s\vee H\in\mathcal{F}_r(n)$.

Now, since $\lambda(G)=\lambda(\overline K_s\vee H)$, equality must hold in
\eqref{eq:difference-Rayleigh-quotient}. This first implies that $G[S]$ has no edges, and then for each $u\in S$,
\[
\sum_{z\in N} x_z = \sum_{z\in N_G(u)\cap N} x_z.
\]
It follows that each vertex of $S$ is
adjacent to every vertex of $N$. Thus $G=\overline{K}_s\vee H$.
By \eqref{eq:join-in-F-main1}, we have $G=\overline K_s\vee H\in\mathcal F_r(n)$. Since $G\not\cong T_r(n)$, Lemma~\ref{lem:F-local} guarantees a vertex $u$ such that
$e(G[N_G(u)])>e(T_{r-1}(d_G(u)))$.
Applying Corollary~\ref{coro:Prob1} yields
$\lambda(G[N_G(u)])>\lambda(T_{r-1}(d_G(u)))$.
This completes the proof.
\end{proof}

\section{Proof of Theorem \ref{Thm:Prob2}}
\label{Sec:4}

In this section, we present a proof of Theorem \ref{Thm:Prob2}.
Let $k\geq 2$ be an integer, and let $G^{\vee k}$ denote the $k$-fold join of $G$, defined as the graph obtained by taking $k$ vertex-disjoint copies of $G$ and adding all edges between vertices belonging to distinct copies.
For further results on the spectral properties of $G^{\vee k}$, we refer the reader to \cite{Cardoso-Gomes-Pinheiro2022}. For a symmetric matrix $M$, let $\lambda_{\min}(M)$ denote the smallest eigenvalue of $M$.

\begin{proof}[Proof of Theorem \ref{Thm:Prob2}]
Let $H_k$ denote the $k$-fold join of $G$, and set $d:=d(G)$ and $s:=n-d> 0$. Obviously, $|V(H_k)| = kn$, $\omega(H_k) = k\omega(G)$, and
the number of edges of $H_k$ is
$e(H_k)=ke(G)+\binom{k}{2}n^2$.
Consequently, the average degree of $H_k$ is
\begin{equation}\label{eq:avg-degree-join}
d(H_k)=d+(k-1)n = kn-(n-d) = kn-s.
\end{equation}

We next claim a key spectral fact: $\lambda_{\min}(H_k)$ is bounded independently of
$k$. Let $A=A(G)$, and let $J_n$ and $J_k$ denote the all-ones matrices of
orders $n$ and $k$, respectively. The adjacency matrix of $H_k$ can be expressed as
$A(H_k) = I_k\otimes A + (J_k-I_k)\otimes J_n$, where $P\otimes Q$ is the
Kronecker product of matrices $P$ and $Q$.
Writing $J_k = \bm{1}_k \bm{1}_k^{\mathrm{T}}$ and $J_n = \bm{1}_n \bm{1}_n^{\mathrm{T}}$, we obtain
\begin{align*}
A(H_k) & = I_k \otimes A + (\bm{1}_k \bm{1}_k^{\mathrm{T}} - I_k) \otimes (\bm{1}_n \bm{1}_n^{\mathrm{T}}) \\
& = I_k \otimes A + (\bm{1}_k\bm{1}_k^{\mathrm{T}}) \otimes (\bm{1}_n\bm{1}_n^{\mathrm{T}}) - I_k \otimes J_n \\
& = I_k\otimes (A - J_n) + (\bm{1}_k \otimes \bm{1}_n)(\bm{1}_k \otimes \bm{1}_n)^{\mathrm{T}}.
\end{align*}
Since $(\bm{1}_k \otimes \bm{1}_n)(\bm{1}_k \otimes \bm{1}_n)^{\mathrm{T}}$ is positive semidefinite, it follows from Weyl's inequality that
\[
0\geq\lambda_{\min} (H_k) \geq \lambda_{\min} (I_k\otimes (A - J_n))
= \lambda_{\min} (A-J_n).
\]
Hence, $\lambda_{\min}(H_k)$ is bounded independently of $k$.
Now apply \eqref{eq:least-eigenvalue-bound} to $H_k$. Using
\eqref{eq:avg-degree-join}, we get
\[
k\omega(G)\geq
1 + \frac{kn\cdot d(H_k)}{\big(kn-d(H_k)\big) \big(d(H_k) -\lambda_{\min}(H_k)\big)} = 1 + \frac{(kn-s)kn}{s\big(kn-s-\lambda_{\min}(H_k)\big)}.
\]
Dividing both sides by $k$ gives
\begin{equation}\label{eq:divide-by-k}
\omega(G)\geq\frac{1}{k} + \frac{n(kn-s)}{s\big(kn-s-\lambda_{\min}(H_k)\big)}.
\end{equation}
Taking $k\to\infty$ in \eqref{eq:divide-by-k} yields
$\omega(G)\geq n/(n-d)$.

Finally, if $e(G) > (1-1/r) n^2/2$, then
$d> (1-1/r) n$, and therefore $n/(n-d) > r$.
Hence $\omega(G)\geq r+1$, which implies that $G$ contains $K_{r+1}$ as a subgraph.
\end{proof}

\section{Concluding remarks}

The results above show that several classical implications between Tur\'an-type statements and spectral Tur\'an-type statements are sharper than one might first expect.  We close with two related directions.  The first concerns the stability of spectral comparisons under graph operations, and the second asks how far hereditary density assumptions can be pushed toward spectral conclusions.

In \cite{ALNS26+}, Ai, Lei, Ning, and Shi posed the following problem about joins with a new vertex.

\begin{problem}\label{Prob:Main3}
Let $G_1,G_2$ be two graphs with the same vertex set and $\lambda(G_1)\geq \lambda(G_2)$.
Determine the pairs $G_1,G_2$ for which
$\lambda(G_1\vee K_1)\ge \lambda(G_2\vee K_1)$.
\end{problem}
As pointed out in \cite{ALNS26+}, the assertion does not hold for arbitrary pairs of graphs.  It is true when $G_1$ is regular, and the following observation gives a slightly more general form with an independent set of any fixed size.

\begin{theorem}
Let $G$ and $H$ be two graphs on the same number $n$ of vertices.
Suppose that $G$ is regular. If
$\lambda(G) \geq \lambda(H)$,
then, for every $s \geq 1$,
$\lambda(G\vee\overline{K}_s ) \geq \lambda(H\vee\overline{K}_s)$.
\end{theorem}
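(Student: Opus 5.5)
The plan is to combine the two parts of Lemma~\ref{lem:join-coronal}(2): the general upper bound for $H$ and the exact value for the regular graph $G$. Since $\overline{K}_s\vee H = H\vee\overline{K}_s$, Lemma~\ref{lem:join-coronal}(2) applied to $H$ gives
\[
\lambda(H\vee\overline{K}_s)\le \frac{\lambda(H)+\sqrt{\lambda(H)^2+4ns}}{2}.
\]
For the regular graph $G$, the equality case of the same lemma gives
\[
\lambda(G\vee\overline{K}_s)=\frac{\lambda(G)+\sqrt{\lambda(G)^2+4ns}}{2}.
\]
Here $G$ and $H$ have the same order $n$, so the term $4ns$ is identical in both expressions.

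It remains to observe that $g(t)=\bigl(t+\sqrt{t^2+4ns}\bigr)/2$ is nondecreasing for $t\ge 0$. Both summands are increasing in $t$ on this range, and spectral radii are nonnegative. Hence $\lambda(G)\ge\lambda(H)$ gives
\[
\lambda(H\vee\overline{K}_s)\le g(\lambda(H))\le g(\lambda(G))=\lambda(G\vee\overline{K}_s),
\]
which is the claim.

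I expect no real obstacle; the main work was already done in Lemma~\ref{lem:join-coronal}(2). The one point to check is that the equality case there applies to any regular $G$, including a disconnected one such as $G=\overline{K}_n$. The equitable-partition argument does not need connectivity, because the join $G\vee\overline{K}_s$ is connected for $s\ge1$ and $n\ge1$, so its spectral radius equals the largest eigenvalue of the quotient matrix. Likewise, the upper bound for $H$ only used the Rayleigh principle and Cauchy--Schwarz, so it needs no extra hypotheses on $H$.
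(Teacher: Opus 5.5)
Your proposal is correct and follows the paper's proof exactly: the upper bound of Lemma~\ref{lem:join-coronal}(2) for $H$, its equality case for the regular graph $G$, and monotonicity of $t\mapsto \bigl(t+\sqrt{t^2+4ns}\bigr)/2$ on $[0,\infty)$. Your connectivity remark is also sound, though not needed: the lifted quotient eigenvector is positive, and a positive eigenvector of a nonnegative matrix always belongs to its spectral radius.
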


\begin{proof}
Let $G$ be $d$-regular. Then $d = \lambda(G)$.
By the equality case of (2) in Lemma \ref{lem:join-coronal}, we have
$\lambda(\overline{K}_s\vee G) = (d + \sqrt{d^2 + 4ns})/2$.
On the other hand, Lemma \ref{lem:join-coronal} implies that
\[
\lambda(\overline{K}_s\vee H) \leq \frac{\lambda(H) + \sqrt{\lambda(H)^2 + 4ns}}{2}.
\]
Consider the function $f(x) = (x + \sqrt{x^2 + 4ns})/2$, $x \geq 0$.
One can check that this function is increasing on $[0,\infty)$.
Since $\lambda(H) \leq \lambda(G) = d$, it follows that
\[
\frac{\lambda(H) + \sqrt{\lambda(H)^2 + 4ns}}{2}
   \leq \frac{d + \sqrt{d^2 + 4ns}}{2}.
\]
Combining the two inequalities, we obtain $\lambda(\overline{K}_s\vee G) \geq \lambda(\overline{K}_s\vee H)$.
\end{proof}

Thus, the regularity of graphs is enough to make the join operation preserve the spectral-radius comparison.  It would be interesting to identify the weakest structural hypotheses under which this remains true.  In view of Lemma~\ref{lem:join-coronal}, the problem can also be phrased in terms of comparing the coronals $\chi_G(x)$ and $\chi_H(x)$ near the relevant Perron root; this formulation may be useful for irregular graphs.

\end{document}